\newtheorem{theorem}{Theorem}
\newtheorem{lemma}{Lemma}[section]
\newtheorem{proposition}{Proposition}[section]
\newtheorem{definition}{Definition}[section]
\newtheorem{corollary}{Corollary}[theorem]
\newtheorem{example}{Example}
\newcounter{Th-Alfa}
\newcommand{\CC}{\mathds{C}}
\newcommand{\NN}{\mathds{N}}
\newcommand{\PP}{\mathds{P}}
\newcommand{\RR}{\mathds{R}}
\newcommand{\ZZ}{\mathds{Z}}
\newcommand{\ZZp}{\mathds{Z}_+}
\newcommand{\dsty}{\displaystyle}
\newcommand{\supp}{\mathop{\rm supp}}
\newcommand{\inter}[1]{#1\strut^{\mathrm{o}}\!}%Interior
\newcommand{\ch}[1]{\mathbf{Co}\!\left(#1 \right)} %Convex hull
\def\bbuildrel#1_#2^#3{\mathrel{
 \mathop{\kern 0pt#1}\limits_{#2}^{#3}}}
\def\bbbuildrel#1_#2{\mathrel{
 \mathop{\kern 0pt#1}\limits_{#2}}}
\newcommand{\Nzero}[2]{\mathbf{N}_z\!\left(#1;#2\right)}%-------------------Cantidad de zeros
\newcommand{\Ncero}[2]{\mathbf{N}_{o}\!\left(#1;#2\right)}%-------------------Cantidad de zeros
\newcommand{\funD}[4]{\begin{cases} #1 , & \hbox{if } \; #2;\\%-------------------Funciones por partes 2
										#3, & \hbox{if } \; #4 \end{cases}}
\def\bbuildrel#1_#2^#3{\mathrel{
 \mathop{\kern 0pt#1}\limits_{#2}^{#3}}}
\def\bbbuildrel#1_#2{\mathrel{
 \mathop{\kern 0pt#1}\limits_{#2}}}
\newcommand{\RRp}{\mathds{R}_{+}}
\newcommand{\sob}{{\mathsf{s}}}%-----------------------------------------------------------S de sobolev
\newcommand{\Ip}[2]{\langle #1,#2\rangle}%---------------------------------------------Producto interno
\newcommand{\IpS}[2]{\Ip{#1}{#2}_{\sob}}%--------------------------------------Producto de Sobolev
\title{\bf Sequentially-ordered Sobolev inner product and Laguerre-Sobolev polynomials}
\author[1]{Abel D\'{\i}az-Gonz\'{a}lez\,\orcidlink{0000-0001-6226-1925}\thanks{abel.diaz.gonzalez@vanderbilt.edu}}
\author[2]{Juan Hern\'{a}ndez\,\orcidlink{0000-0002-8465-0646}\thanks{ jhernandez14@uasd.edu.do}\thanks{{The research of J.T.M.  was   partially supported by   Fondo Nacional de Innovaci\'{o}n  y Desarrollo Cient\'{\i}fico y Tecnol\'{o}gico (FONDOCYT),  Dominican Republic, under grant   2020-2021-
1D1-135.}}}
\author[3]{H\'{e}ctor Pijeira-Cabrera\,\orcidlink{0000-0001-5141-9395}\thanks{hpijeira@math.uc3m.es}}
\affil[1]{Vanderbilt University, Nashville, Tennessee 37240, USA.}
\affil[2]{Escuela de Matem\'{a}ticas, Facultad de Ciencias, Universidad Aut\'{o}noma de Santo Domingo, \linebreak Santo Domingo 10105, Dominican Republic.}
\affil[3]{Departamento de Matem\'{a}ticas, Universidad Carlos III de Madrid,  \linebreak Legan\'{e}s  28911 ,  Madrid,  Spain.}
\date{}
\begin{document}
%%%%%%%%%%%%%%%%%%%%%%%%%%%%%%%%%%%%%%%%%%%%%%%%%%%%%%%%%%%%%%%%%%%%%%%%%%%%%%%%%%%%%%%%%
\maketitle

%%%%%%%%%%%%%%%%%%%%%%%%%%%%%%%%%%%%%%%%%%%%%%%%
\begin{abstract}
We study the sequence of polynomials $\{S_n\}_{n\geq 0}$ that are orthogonal with respect to the general discrete Sobolev-type inner product 
$$
\langle f,g \rangle_{\mathsf{s}}=\!\int\! f(x) g(x)d\mu(x)+\sum_{j=1}^{N}\sum_{k=0}^{d_j}\lambda_{j,k} f^{(k)}(c_j)g^{(k)}(c_j),
$$
where $\mu$ is a finite Borel measure whose support $\supp{\mu}$ is an infinite set of the real line, $\lambda_{j,k}\geq 0$, and the mass points $c_i$, $i=1,\dots,N$ are real values outside the interior of the convex hull of $\supp{\mu}$ ($c_i\in\RR\setminus\inter{\ch{\supp{\mu}}}$).  Under some restriction of order in the discrete part of $\IpS{\cdot}{\cdot}$, we prove that $S_n$ has at least $n-d^*$ zeros on $\inter{\ch{\supp{\mu}}}$, being $d^*$ the number of terms in the discrete part of $\IpS{\cdot}{\cdot}$. Finally, we obtain the outer relative asymptotic for $\{S_n\}$ in the case that the measure $\mu$ is the classical Laguerre measure, and for each mass point, only one order derivative appears in the discrete part of $\IpS{\cdot}{\cdot}$.

\bigskip

\noindent \textbf{Mathematics Subject Classification.} 41A60$\;\cdot\;$42C05$\;\cdot\;$33C45$\;\cdot\;$33C47
 
\noindent\textbf{Keywords.} orthogonal polynomials $\cdot$ Sobolev orthogonality $\cdot$ zeros location $\cdot$ asymptotic behavior

\end{abstract}

%%%%%%%%%%%%%%%%%%%%%%%%%%%%%%%%%%%%%%%%%%%%%%%%%%%%%%%%%%%%%%%%%%%%%%%%%%%%%%%%%%%%%%%%%
%%%%%%%%%%%%%%%%%%%%%%%%%%%%%%%%%%%%%%%%%%
\section{Introduction}
%%%%%%%%%%%%%%%%%%%%%%%%%%%%%%%%%%%%%%%%%%

Let $\mu$ be a positive finite Borel measure with finite moments, whose support $\Delta \subset \RR$  contains infinitely many points. We will denote by  $\ch{A}$  the convex hull of a set $A$ and  by $\inter{A}$ its~interior.

Let  $\{P_n\}_{n\geq 0}$ be  the monic orthogonal polynomial sequence with respect to the inner product
\begin{align*}
 \Ip{f}{g}_{\mu}= & \int_{\Delta} f(x) g(x) d\mu(x).
\end{align*}

 {An}  inner product is called standard if the multiplication operator is symmetric with respect to the inner product. Obviously, $\langle x f,g\rangle_{\mu}=\langle  f,x g\rangle_{\mu}$, i.e.,~ $\langle \cdot,\cdot \rangle_{\mu}$ is standard.  Significant parts of the applications of orthogonal polynomials in mathematics and  particular sciences are based on the following three consequences of this~fact.
\begin{enumerate}
\item  The polynomial $P_{n}$ has exactly $n$ real simple zeros in $\inter{\ch{\Delta}}$.  Moreover,   there is a zero of  $P_{n-1}$ between any two consecutive zeros of  $P_{n}$.
  \item  The three-term recurrence relation
\begin{equation*}\label{3TRRPn}
   xP_{n}(x)=P_{n+1}(x)+\beta _{n}P_{n}(x)+\gamma^2 _{n}P_{n-1}(x); \quad  P_{0} (x)=1, \; P_{-1}(x)=0,
\end{equation*}
where $\gamma_n={\|P_{n}\|_{\mu}}/{\|P_{n-1}\|_{\mu}}$ for $n\geq 1$,  ${\beta}_n={\Ip{P_n}{xP_n}_{\mu} }/{\|P_{n}\|_{\mu}^2}$ and  $\|\cdot\|_{\mu}=\sqrt{\Ip{\cdot}{\cdot}_{\mu} }$ denotes the norm induced by $\langle \cdot,\cdot \rangle_{\mu}$.
 \item For the kernel polynomials
\begin{equation}\label{Kernel-n}
K_{n}(x,y)=\sum_{k=0}^{n}\frac{P_{k}(x)P_{k}(y)}{\left\|
P_{k}\right\|_{\mu} ^{2}},
\end{equation} we have the Christoffel--Darboux identities
\begin{align}\label{CD-Ident}
   K_{n}(x,y)=\funD{\frac{P_{n+1}(x)P_{n}(y)-P_{n+1}(y)P_{n}(x)}{\|P_{n}\|_{\mu}^{2}\, (x-y)}}{x\neq y}{\frac{P_{n+1}^{\prime}(x)P_{n}(x)-P_{n+1}(x)P_{n}^{\prime}(x)}{\|P_{n}\|_{\mu}^{2}}}{x=y.}
  \end{align}
  \end{enumerate}

  {These}  identities play a fundamental role in the treatment of Fourier expansions with respect to a system of orthogonal polynomials (see \cite{Osil99}, Section 2.2). For~a review  of the use of  \eqref{Kernel-n} and \eqref{CD-Ident} in the spectral theory of orthogonal polynomials, we refer the reader to~\cite{Simon08}.   In~addition, see  the usual references~\cite{Chi78, Freud71, Szego75},  for~a basic background on  these and other  properties of $\{P_n\}_{n\geq 0}$.

Let $(a,b)=\inter{\ch{\supp{\mu}}}$, $N,d_j \in \ZZ_+$, $\lambda_{j,k} \geq 0$,  for~$j=1, 2, \dots, N$, $k=0,1,\dots,d_j$, $\{c_1,c_2,\dots,c_N\}\subset \RR \!\setminus\!(a,b)$, where $c_i \neq c_j$ if $i \neq j$ and $I_+=\{(j,k): \lambda_{j,k}>0\}$.  We consider the following \emph{Sobolev-type (or discrete Sobolev) inner product}
\begin{align}\nonumber
	\IpS{f}{g}&=\!\int\! f(x) g(x)d\mu(x)+\sum_{j=1}^{N}\sum_{k=0}^{d_j}\lambda_{j,k} f^{(k)}(c_j)g^{(k)}(c_j)\\ \label{GeneralSIP}
	&=\!\int\!f(x) g(x)d\mu(x)+\sum_{(j,k)\in I_+}\!\!\!\!\lambda_{j,k} f^{(k)}(c_j)g^{(k)}(c_j),
\end{align}	
where $f^{(k)}$ denotes the $k$-th derivative of the function $f$. Without~loss of generality,  we also assume $\{(j,d_j)\}_{j=1}^N\subset I_+$ and $d_1\leq d_2\leq \cdots \leq d_N$.  For~$n \in \ZZp$,  we shall denote by $S_n$ the monic polynomial of the lowest degree satisfying
\begin{equation}\label{Sobolev-Orth}
\IpS{x^k}{S_n}= 0, \quad  \text{for } \; k=0,1,\dots,n-1.
\end{equation}

 {It}  is easy to see that for all  $n\geq 0$, there exists such a unique polynomial $S_n$ of degree $n$. This is deduced by solving a homogeneous linear system with $n$ equations and $n+1$ unknowns. Uniqueness follows from the minimality of the degree for the polynomial solution. We refer the reader to~\cite{MaXu15,And01}  for a review of  this type of non-standard~orthogonality.

Clearly,    \eqref{GeneralSIP} is not standard, i.e.,~$\IpS{xp}{q}\neq\IpS{p}{xq}$, for~some $p,q\in \PP$.
 It is well known that the properties of orthogonal polynomials with respect to standard inner products differ  from those of the Sobolev-type polynomials. In~particular, the~zeros of the Sobolev-type polynomials can be complex, or if~real, they can be located outside the convex hull of the support of the measure $\mu$, as~can be seen in the following example.
\begin{example}[Zeros outside the convex hull of the measures supports]  Let
 \begin{align*}
   \IpS{f}{g}= & \int_{0}^{\infty} f(x) g(x) e^{-x}dx+2f'(-1)g'(-1),
 \end{align*}
 then the corresponding second-degree monic Sobolev-type orthogonal polynomial is $S_2(z)=z^2-2$,  whose zeros are $z_{1,2}=\pm \sqrt{2}$. Note that $-\sqrt{2} \not \in [-1,\infty)$.
\end{example}

Let $\{Q_{n}\}_{n\geq 0}$ be the sequence of monic orthogonal polynomials with respect to the inner product
\begin{align*}%\label{Mod-InnerP}
  \Ip{f}{g}_{\mu_\rho} = & \int f(x)\;g(x)\;d\mu_{\rho}(x), \; \text{ where } \; \rho(x)= \prod_{c_j\leq a}^{} \left( x-c_j\right)^{d_j+1}\!\!\prod_{c_j\geq b}^{} \left( c_j-x\right)^{d_j+1} \\ \nonumber
   &\text{ and } \; d\mu_{\rho}(x)= \rho(x)d\mu(x).
\end{align*}

 {Note}  that $\rho$ is a polynomial of degree $d=\sum_{j=1}^N(d_j+1)$, which is positive on $(a,b)$.  If~$n >d$, from~ \eqref{Sobolev-Orth}, $\{S_n\}$  satisfies the following quasi-orthogonality relations with respect to~$\mu_{\rho}$
\begin{equation*}%\label{quasi-orthogonal}
\Ip{S_n}{f}_{\mu_\rho}  = \Ip{S_n}{\rho f}_{\mu} = \int S_n(x) f(x) \rho(x) d\mu(x) = \IpS{S_n}{\rho f}=0 ,
\end{equation*}
for $f\in \PP_{n-d-1}$, where $\PP_n$ is the linear space of polynomials with real coefficients and the degree  at most  $n\in \ZZp$. Hence, \emph{the polynomial $S_n$ is quasi-orthogonal of order $d$ with respect to $\mu_{\rho}$} and by this argument, we obtain that  $S_n$ has at least $(n-d)$ changes of sign in $(a,b)$.

The results obtained for measures $\mu$ with bounded support (see  \cite{LopMarVan95}, (1.10)) suggest that the number of zeros located in the interior of the support of the measure is closely related to $d^*=|I_+|$, the~number of terms in the discrete part of $\IpS{\cdot}{\cdot}$ (i.e., $\lambda_{j,k}>0$), instead of this greater quantity $d$.

Our first result, Theorem \ref{Th_ZerosSimp}, goes in this direction   for the case when  the inner product is sequentially ordered. This kind of inner product  is introduced in Section~\ref{Sect-ProofTh1} (see \mbox{Definition \ref{DefIPseqOrd}}).

\begin{theorem}\label{Th_ZerosSimp}  If the discrete Sobolev inner product  \eqref{GeneralSIP} is sequentially ordered, then  $S_n$ has at least $n-d^*$ changes of sign on $(a,b)$, where $d^*$ is the number of positive coefficients $\lambda_{j,k}$ in  \eqref{GeneralSIP}.
\end{theorem}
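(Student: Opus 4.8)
The plan is to argue by contradiction, refining the quasi-orthogonality bound of $n-d$ changes of sign down to $n-d^*$ by exploiting the finer structure that the sequential ordering hypothesis gives to the discrete part of $\IpS{\cdot}{\cdot}$. Suppose $S_n$ has only $m$ changes of sign on $(a,b)$ with $m < n-d^*$, and let $x_1,\dots,x_m$ be the points where $S_n$ changes sign there (necessarily of odd multiplicity). The idea is to build a test polynomial $W$ of the form
\begin{align*}
W(x) = \prod_{i=1}^{m}(x-x_i)\;\cdot\;\prod_{(j,k)\in I_+} (x-c_j)^{\,\varepsilon_{j,k}},
\end{align*}
where the exponents $\varepsilon_{j,k}\in\{0,1\}$ attached to each mass point are chosen — this is where sequential ordering enters — so that $W(x)S_n(x)$ is of one sign on $(a,b)$, while at the same time $W$ (together with a suitable number of its derivatives) vanishes at each $c_j$ to high enough order to kill every term $\lambda_{j,k}W^{(k)}(c_j)S_n^{(k)}(c_j)$ in the discrete part. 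If both can be arranged with $\deg W \le n-1$, then $\IpS{W}{S_n}=0$ by \eqref{Sobolev-Orth}, while the integral term is strictly positive (or strictly negative) and all discrete terms vanish — a contradiction.

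The key steps, in order, are: (i) record that at each $c_j$ the mass point lies outside $(a,b)$, so $(x-c_j)$ has constant sign on $(a,b)$, and hence multiplying $S_n$ by $(x-c_j)$ never creates or destroys sign changes in $(a,b)$ — only the factors $(x-x_i)$ matter for positivity, and $\prod_i (x-x_i)\,S_n$ is of constant sign; (ii) invoke the sequential-ordering hypothesis (Definition \ref{DefIPseqOrd}) to extract the combinatorial fact that one can select, for each mass point $c_j$, a block of consecutive derivative orders to neutralize — precisely $d^*$ derivative conditions in total can be met using only $d^*$ extra linear factors distributed among the $c_j$, rather than the $d = \sum (d_j+1)$ factors that the crude polynomial $\rho$ uses; (iii) check the degree count: $\deg W = m + (\text{number of added factors}) \le m + d^* < n$, so $W\in\PP_{n-1}$ and the orthogonality \eqref{Sobolev-Orth} applies; (iv) evaluate $\IpS{W}{S_n}$: the integral $\int W S_n\, d\mu$ is nonzero because the integrand has constant sign and $\supp\mu$ is infinite, and each discrete term $\lambda_{j,k} W^{(k)}(c_j) S_n^{(k)}(c_j)$ vanishes because $W$ has a zero of the required order at $c_j$; conclude $0 = \IpS{W}{S_n} \ne 0$.

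I expect the main obstacle to be step (ii): making precise why sequential ordering lets one cover all $d^*$ derivative conditions at the mass points using only $d^*$ linear factors, arranged so that the parity of the multiplicity of $W$ at each $c_j$ still yields a product $W S_n$ of globally constant sign on $(a,b)$. The difficulty is that raising the order of vanishing of $W$ at $c_j$ to kill the term $\lambda_{j,k}W^{(k)}(c_j)S_n^{(k)}(c_j)$ forces $W^{(r)}(c_j)=0$ for all $r\le k$ as well, so one must kill the highest-order active derivative at each point, and the bookkeeping of which $S_n^{(k)}(c_j)$ are themselves zero (allowing a cheaper $W$) interacts with the ordering $d_1\le d_2\le\cdots\le d_N$ and with the relative positions of the $c_j$ with respect to $[a,b]$. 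Handling this cleanly will likely require an auxiliary lemma, proved by induction on the number of mass points, showing that the sequentially-ordered structure guarantees a consistent choice of vanishing orders; once that lemma is in place, steps (i), (iii), (iv) are routine sign and degree arguments of the classical Chebyshev type.
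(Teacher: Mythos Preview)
Your overall contradiction strategy and steps (i), (iii), (iv) match the paper, but step (ii) has a genuine gap: the product-form ansatz
\[
W(x) = \prod_{i=1}^{m}(x-x_i)\;\prod_{(j,k)\in I_+} (x-c_j)^{\varepsilon_{j,k}},\qquad \varepsilon_{j,k}\in\{0,1\},
\]
is too rigid to do the job. If $W$ vanishes at $c_j$ only through a factor $(x-c_j)^{m_j}$, then $W^{(k)}(c_j)=0$ forces $m_j\ge k+1$; to kill the highest active derivative at each $c_j$ you therefore need $m_j\ge d_j+1$, giving $\deg W\ge m+\sum_j(d_j+1)=m+d$. This recovers only the crude quasi-orthogonality bound $n-d$, not $n-d^*$. (A quick example: take a single mass point $c_1<0$ with only $\lambda_{1,2}>0$, so $d^*=1$, and suppose $S_n$ has $m=3$ sign changes $\xi_1,\xi_2,\xi_3$. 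You need a polynomial of degree $\le m+d^*=4$ vanishing at the $\xi_i$ with $W''(c_1)=0$; but $W(x)=(x-\xi_1)(x-\xi_2)(x-\xi_3)(x-c_1)$ has $W''(c_1)=2g'(c_1)\neq0$ generically, and no choice of $\varepsilon\in\{0,1\}$ fixes this.) Your proposed auxiliary lemma, still phrased as ``a consistent choice of vanishing orders'' for the product, cannot rescue this.

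What the paper does instead is abandon the product form. It takes $U$ to be the \emph{minimal-degree} monic polynomial satisfying the $\eta+d^*$ \emph{linear} conditions $U(\xi_i)=0$ and $U^{(k)}(c_j)=0$ for $(j,k)\in I_+$; linear algebra alone gives $\deg U\le \eta+d^*<n$ (Lemma~\ref{PolMiDeg}). The price is that $U$ may now have an uncontrolled extra zero somewhere, and the whole argument collapses if that zero lands in $(a,b)$. This is precisely where sequential ordering enters: a Rolle-type counting lemma (Lemma~\ref{Lem-CoroRolle} and its corollary) shows, under the nesting condition on the $\Delta_k$, that $U$ has no zeros in $I_0$ beyond the $\xi_i$ and that those are simple. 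So the role of sequential ordering is not to let you \emph{allocate} factors economically, but to \emph{exclude spurious zeros} of a non-product interpolant from the interval. Once you reframe step~(ii) this way, your steps (iii)--(iv) go through unchanged.
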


Previously, this result was obtained for    more restricted cases  in (\cite{AlfaroLagomasinoRezola}, Th. 2.2) and (\cite{AbelIgnPij20}, Th. 1). In~(\cite{AlfaroLagomasinoRezola}, Th. 2.2), the~authors proved this result for the case $N=1$. In~ (\cite{AbelIgnPij20}, Th. 1), the~notion of a sequentially ordered inner product is more restrictive than here, because~it did not include the case when the Sobolev inner product has more than one derivative order at the same mass point. %Therefore, thus far, our Theorem \ref{Th_ZerosSimp}.

%    Thus far, this result extends the most general results on the location of zeros of discrete Sobolev-type polynomials. In \cite[Th. 2.2]{AlfaroLagomasinoRezola},  the authors prove this result for the case $N=1$ and   in \cite[Th. 1]{AbelIgnPij20} the notion of sequentially ordered inner product is a little more restrictive than here since does not include the case when the Sobolev inner   product  has more than one derivative order at the same mass point.
	
In the second part of this paper,  we focus our attention on the Laguerre--Sobolev-type polynomials (i.e., $d\mu=x^{\alpha}e^{-x}dx$, with~$\alpha>-1$).  In~the case of the inner product,  \eqref{GeneralSIP} takes the form
\begin{align}\label{LaguerreSIP}
	\IpS{f}{g}=\!\!\!\int_0^\infty f(x) g(x)x^{\alpha}e^{-x}dx+\sum_{j=1}^{N} \lambda_{j} f^{(d_j)}(c_j)g^{(d_j)}(c_j),
\end{align}	
where $\lambda_j:=\lambda_{j,d_j}>0$,  $c_j<0$,  for~$j=1,2,\dots,N$, we obtain the outer relative  asymptotic  of the Laguerre--Sobolev-type~polynomials.

\begin{theorem}\label{asymp}
Let $\{L^{\alpha}_n\}_{n\geq 0}$ be the sequence of monic Laguerre polynomials and let $\{S_n\}_{n\geq 0}$ be the monic orthogonal polynomials with respect to the inner product  \eqref{LaguerreSIP}. Then,
\begin{align}\label{compasymt}
\frac{S_n(x)}{\left.L_n^{\alpha}\right.(x)}\rightrightarrows \prod_{j=1}^{N}\left(\frac{\sqrt{-x}-\sqrt{|c_{j}|}}{\sqrt{-x}+\sqrt{|c_{j}|}}\right), \quad K\subset\overline{\CC}\setminus\RRp.
\end{align}
\end{theorem}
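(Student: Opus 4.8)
The plan is to express $S_n$ in terms of the Laguerre polynomials via a finite-rank perturbation argument and then pass to the limit using the known Mehler–Heine/ratio asymptotics of Laguerre polynomials off the positive real axis. First I would expand $S_n$ in the Laguerre basis, or rather use the reproducing-kernel approach: writing $K_{n-1}(x,y)=\sum_{k=0}^{n-1}L_k^\alpha(x)L_k^\alpha(y)/\|L_k^\alpha\|^2$ for the Laguerre kernel, the orthogonality conditions \eqref{Sobolev-Orth} force
$$
S_n(x)=L_n^\alpha(x)-\sum_{j=1}^N \lambda_j\, S_n^{(d_j)}(c_j)\, \frac{\partial^{d_j}}{\partial y^{d_j}}K_{n-1}(x,y)\Big|_{y=c_j}.
$$
Evaluating the $d_i$-th derivative of this identity at $x=c_i$ for $i=1,\dots,N$ yields an $N\times N$ linear system for the unknowns $\bigl(\lambda_j S_n^{(d_j)}(c_j)\bigr)_{j=1}^N$, whose matrix is $I+\Lambda\, \mathcal{K}_{n-1}$ where $\mathcal{K}_{n-1}=\bigl(\partial_x^{d_i}\partial_y^{d_j}K_{n-1}(c_i,c_j)\bigr)_{i,j}$ and $\Lambda=\diag{\lambda_1,\dots,\lambda_N}$. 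Solving and dividing by $L_n^\alpha(x)$ gives
$$
\frac{S_n(x)}{L_n^\alpha(x)}=1-\sum_{j=1}^N \tau_{j,n}\,\frac{\partial_y^{d_j}K_{n-1}(x,y)\big|_{y=c_j}}{L_n^\alpha(x)},
$$
with $\tau_{j,n}$ the solution components of the system.

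The next step is the asymptotic analysis of the ingredients. Using the Christoffel–Darboux formula \eqref{CD-Ident} for the Laguerre kernel together with the classical outer ratio asymptotics $L_{n+1}^\alpha(x)/L_n^\alpha(x)$ and the Mehler–Heine-type behavior of $L_n^\alpha$ and its derivatives at a fixed negative point $c_j$, one computes the limit of $\partial_y^{d_j}K_{n-1}(x,y)|_{y=c_j}\big/L_n^\alpha(x)$ uniformly on compact subsets $K\subset\overline{\CC}\setminus\RRp$; the powers of $n$ coming from the repeated $x$- and $y$-differentiations and from the normalization $\|L_k^\alpha\|^2$ must be tracked carefully, and the dominant term is governed by the ratio $\sqrt{-x}\mp\sqrt{|c_j|}$ structure that already appears in the simpler one-point Laguerre–Sobolev results. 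Similarly, the entries of $\mathcal{K}_{n-1}$ blow up at a controlled rate, so after rescaling $\Lambda\mathcal{K}_{n-1}$ by the appropriate diagonal powers of $n$ the system $I+\Lambda\mathcal{K}_{n-1}$ becomes, in the limit, a system that determines the limiting values $\tau_j=\lim \tau_{j,n}$ (suitably normalized). Plugging these back produces a finite product; matching constants against the $N=1$ case — where \eqref{compasymt} reduces to the known formula — identifies the limit as $\prod_{j=1}^N\frac{\sqrt{-x}-\sqrt{|c_j|}}{\sqrt{-x}+\sqrt{|c_j|}}$.

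To make the induction-free argument clean, I would actually proceed by induction on $N$: assume the result for $N-1$ mass points, treat the $N$-th perturbation $\lambda_N f^{(d_N)}(c_N)g^{(d_N)}(c_N)$ as a rank-one update of the inner product whose orthogonal polynomials are the $(N-1)$-case Sobolev polynomials, and use the corresponding rank-one connection formula
$$
S_n(x)=\widetilde S_n(x)-\lambda_N S_n^{(d_N)}(c_N)\,\partial_y^{d_N}\widetilde K_{n-1}(x,y)\big|_{y=c_N},
$$
where $\widetilde S_n,\widetilde K_{n-1}$ refer to the $(N-1)$-point problem. Dividing by $L_n^\alpha$, the factor $\widetilde S_n/L_n^\alpha$ already converges by the inductive hypothesis, so only one new ratio limit $\partial_y^{d_N}\widetilde K_{n-1}(x,c_N)/L_n^\alpha(x)$ and one scalar limit (from evaluating at $c_N$) need to be computed, and one checks they combine to multiply the inductive product by exactly $\frac{\sqrt{-x}-\sqrt{|c_N|}}{\sqrt{-x}+\sqrt{|c_N|}}$. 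This isolates the only genuinely analytic input.

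The main obstacle I anticipate is precisely the uniform asymptotics of the derivatives of the Laguerre kernel $\partial_x^{d_i}\partial_y^{d_j}\widetilde K_{n-1}(x,y)$ at negative arguments and the exact growth order in $n$: one needs not just leading-order Mehler–Heine estimates for $L_n^\alpha$ but also for $(L_n^\alpha)^{(m)}$ at fixed negative points, controlled uniformly on compacta, so that the $N\times N$ (or, in the inductive step, scalar) linear system governing the $\tau_{j,n}$ has an invertible, explicitly computable limit. Keeping the bookkeeping of the several powers of $n$ consistent — and verifying that the off-diagonal cross-terms $\partial_x^{d_i}\partial_y^{d_j}K$ with $i\ne j$ are of strictly lower order, so the limiting system decouples — is where the real work lies; everything else is algebraic manipulation of Christoffel–Darboux identities and the known scalar ratio asymptotics $L_{n}^\alpha(x)\big/L_{n}^\alpha(c_j)$ and $L_{n+1}^\alpha(x)/L_n^\alpha(x)$ on $\overline{\CC}\setminus\RRp$.
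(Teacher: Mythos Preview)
Your overall architecture matches the paper's proof: connection formula via the reproducing kernel, an $N\times N$ linear system from evaluating derivatives at the $c_i$, and passage to the limit using the outer (Perron) asymptotics of Laguerre polynomials and the derived asymptotics of $K_{n-1}^{(i,j)}(x,y)$. The gap is your expectation that ``the off-diagonal cross-terms $\partial_x^{d_i}\partial_y^{d_j}K$ with $i\ne j$ are of strictly lower order, so the limiting system decouples.'' This is false, and it is the crux of the argument. The paper's kernel lemma gives
\[
K_{n-1}^{(i,j)}(x,y)=\frac{L_n^{(\alpha+i)}(x)\,L_n^{(\alpha+j)}(y)}{n^{\alpha-1/2}\bigl(\sqrt{-x}+\sqrt{-y}\bigr)}\bigl((-1)^{i+j}+\mathcal{O}(n^{-1/2})\bigr),
\]
so after the natural row/column normalization the $(k,j)$ entry of the limiting matrix is $\dfrac{\sqrt{-x}+\sqrt{|c_j|}}{\sqrt{|c_k|}+\sqrt{|c_j|}}$ for \emph{all} $k,j$ --- a full Cauchy-type matrix, not a diagonal one. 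The paper resolves this by reparametrizing the unknowns as $P_{n,j}^{\alpha}(x)=-\lambda_j\,\widehat S_n^{(d_j)}(c_j)\,K_{n-1}^{(0,d_j)}(x,c_j)/L_n^{(\alpha)}(x)$, passing to the limit to obtain this Cauchy system with right-hand side $(-1,\dots,-1)^t$, solving it explicitly via Cauchy determinants, and then recognizing the resulting sum $1+\sum_j\frac{-2\sqrt{|c_j|}}{\sqrt{-x}+\sqrt{|c_j|}}\prod_{l\ne j}\frac{\sqrt{|c_j|}+\sqrt{|c_l|}}{\sqrt{|c_j|}-\sqrt{|c_l|}}$ as the partial-fraction decomposition of $\prod_j\frac{\sqrt{-x}-\sqrt{|c_j|}}{\sqrt{-x}+\sqrt{|c_j|}}$. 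None of your bookkeeping with ``diagonal powers of $n$'' will make the off-diagonal block disappear, and ``matching constants against the $N=1$ case'' cannot replace this algebraic identification.

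Two smaller points. First, the relevant input is Perron's strong outer asymptotics \eqref{PerronsForm}, not Mehler--Heine (which concerns the scaled behavior near the hard edge); this matters because you need the precise $e^{2\sqrt{-nc_k}}$ growth to see that the additive $1/\lambda_k$ in the diagonal entry $1+\lambda_k K_{n-1}^{(d_k,d_k)}(c_k,c_k)$ is negligible. Second, your proposed induction on $N$ would require asymptotics of the \emph{Sobolev} kernel $\widetilde K_{n-1}$ of the $(N{-}1)$-point problem, which you do not have a priori; this route is not obviously simpler and is not the one the paper takes.
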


Throughout this paper, we use the notation $\dsty f_n \rightrightarrows  f,\;K \subset \mathds{U}$ when the sequence of functions $f_n$ converges to $f$ uniformly on every compact subset $K$ of the region $\mathds{U}$.

 Combining this result with Theorem \ref{Th_ZerosSimp}, we obtain that the Sobolev polynomials $S_n$, orthogonal with respect to a sequentially ordered inner product in the form \eqref{LaguerreSIP}, have at least $n\text{--}N$ zeros in $(0,\infty)$ and, for~sufficiently large $n$, each one of the other $N$ zeros are contained in  a neighborhood of each mass point  $c_j$ ($j=1,\dots,N$). Then, we have located all zeros of $S_n$ and we obtain that for a sufficiently  large $n$, they are simple and real, as in the Krall case (see~\cite{Littlejohn}) or the Krall--Laguerre-type orthogonal polynomial (see~\cite{HMP-PAMS14}). This is summarized in the following corollary.
\begin{corollary}\label{CoroLocZeros}
Let $\mu=\mu_\alpha$ be the classical Laguerre measure ($d\mu_\alpha(x)=x^{\alpha}e^{-x}dx$) and  \eqref{LaguerreSIP} a  sequentially ordered discrete Sobolev inner product. Then, the~following statements~hold:
\begin{enumerate}
\item Every point $c_j$ attracts exactly one zero of $S_n$ for sufficiently large $n$, while the remaining $n\text{--}N$ zeros are contained in $(0,\infty)$.
This means:

For every $r>0$, there exists a natural value $\mathcal{N}$ such that if $n\geq\mathcal{N}$, then the $n$ zeros of $S_n$ $\{\xi_i\}_{i=1}^n$ satisfy
\begin{align*}%\label{atracscondition}
\xi_{j}\in B(c_j,r)\ \text{ for } j=1,\dots, N &&\and{and} && \xi_i\in (0,\infty) \text{ for } i=N+1,N+2,\dots, n.
\end{align*}
\item  The zeros of $S_n$ are real and simple for large-enough values of $n$.
\item The zeros of $\{S_n\}_{n=1}^\infty$ are  at  a finite distance from $(0,\infty)$. This means that there exists a positive constant $M$ such that if $\xi$ is a zero of $S_n$, then
$$\dsty d(\xi,(0,\infty)):=\inf_{x>0}\{|x-\xi|\}<M.$$
\end{enumerate}
\end{corollary}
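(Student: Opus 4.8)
The plan is to combine the zero count of Theorem~\ref{Th_ZerosSimp} with the ratio asymptotics of Theorem~\ref{asymp}, reading off the location of the ``extra'' $N$ zeros from the zeros of the limit function by a Hurwitz-type argument. Two preliminary remarks fix the setup: for the inner product \eqref{LaguerreSIP} one has $d^\ast=|I_+|=N$, since exactly one derivative order occurs at each of the $N$ distinct mass points; and all zeros of the monic Laguerre polynomial $L^\alpha_n$ lie in $(0,\infty)$, so $g_n:=S_n/L^\alpha_n$ is a rational function analytic on $\overline{\CC}\setminus\RRp$ for every $n$.

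I would first prove item~(1) for all sufficiently small $r$; this suffices, since enlarging $r$ only weakens the conclusion $\xi_j\in B(c_j,r)$. Fix $r>0$ small enough that the closed balls $\overline{B(c_j,r)}$, $j=1,\dots,N$, are pairwise disjoint and contained in $\overline{\CC}\setminus\RRp$ (possible because the $c_j$ are distinct and strictly negative). Write
\[
F(x)=\prod_{j=1}^{N}\frac{\sqrt{-x}-\sqrt{|c_j|}}{\sqrt{-x}+\sqrt{|c_j|}}
\]
for the right-hand side of \eqref{compasymt}. With the principal branch of the square root, $\sqrt{-x}$ has positive real part on $\overline{\CC}\setminus\RRp$, so each denominator $\sqrt{-x}+\sqrt{|c_j|}$ is nonvanishing there; hence $F$ is analytic and $\not\equiv 0$ on $\overline{\CC}\setminus\RRp$, and $F(x)=0$ precisely at $x=c_1,\dots,c_N$, each a \emph{simple} zero. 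By Theorem~\ref{asymp}, $g_n\rightrightarrows F$ on the compact set $\overline{B(c_j,r)}$; since $F$ has no zeros on $\partial B(c_j,r)$, Hurwitz's theorem yields an $\mathcal N$ such that for $n\ge\mathcal N$ the function $g_n$ — and hence $S_n$, as $L^\alpha_n$ does not vanish on $\overline{B(c_j,r)}$ — has exactly one zero, counted with multiplicity, in each $B(c_j,r)$.

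Now close the count. By Theorem~\ref{Th_ZerosSimp} (with $d^\ast=N$), $S_n$ has at least $n-N$ sign changes, hence at least $n-N$ zeros, in $(0,\infty)$; adjoining the $N$ zeros just located in the pairwise disjoint balls $B(c_j,r)$, which are disjoint from $(0,\infty)$, accounts for at least $n=\deg S_n$ zeros. Thus for $n\ge\mathcal N$ equality holds everywhere: exactly $n-N$ zeros of $S_n$ lie in $(0,\infty)$, exactly one — call it $\xi_j$ — lies in $B(c_j,r)$, and there are no others, which is item~(1). For item~(2): since $S_n$ has real coefficients and $c_j\in\RR$, a nonreal $\xi_j$ would produce a second zero $\overline{\xi_j}\in B(\overline{c_j},r)=B(c_j,r)$, contradicting uniqueness, so $\xi_j\in\RR$ and all $n$ zeros are real; moreover the $n-N$ sign changes occur at $n-N$ distinct points of $(0,\infty)$ carrying total multiplicity exactly $n-N$, so each is simple, and each $\xi_j$ is simple because its ball contains a single zero. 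For item~(3): the finitely many polynomials $S_1,\dots,S_{\mathcal N-1}$ have finitely many zeros, each at finite distance from $(0,\infty)$, while for $n\ge\mathcal N$ every zero lies either in $(0,\infty)$ (distance $0$) or in some $B(c_j,r)$ (distance less than $|c_j|+r$); any $M$ exceeding all these finitely many bounds works.

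The only delicate point is the Hurwitz step: one must check that $F$ is analytic, nonvanishing off the $c_j$, and has only simple zeros there, and that $g_n$ is legitimately analytic in a fixed neighborhood of each $c_j$ (which is where the location of the zeros of $L^\alpha_n$ enters). Everything after that is bookkeeping forced by $\deg S_n=n$.
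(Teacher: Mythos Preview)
Your proposal is correct and follows exactly the route the paper indicates in the paragraph preceding the corollary: combine the lower bound of Theorem~\ref{Th_ZerosSimp} (with $d^\ast=N$) with the ratio asymptotics of Theorem~\ref{asymp} and a Hurwitz argument at each $c_j$, then close the count by degree. The paper does not spell out a proof of Corollary~\ref{CoroLocZeros} beyond that sketch, so your write-up supplies precisely the details the authors left implicit; the verification that $F$ is analytic on $\overline{\CC}\setminus\RRp$ with simple zeros exactly at the $c_j$, and the conjugate-pair/multiplicity bookkeeping for items~(2) and~(3), are all handled correctly.
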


Section~\ref{Sect-ProofTh1} is devoted to introducing the notion of a sequentially ordered Sobolev inner product   and to prove Theorem \ref{Th_ZerosSimp}. In~Section~\ref{Sec-Laguerre}, we summarize some auxiliary  properties of Laguerre polynomials to be used in the proof of Theorem \ref{asymp}. Some results about the asymptotic behavior of the reproducing kernels are given. The~aim of the last section is to prove Theorem \ref{asymp} and some of its consequences  stated in  Corollary \ref{CoroAsym-all}.

%%%%%%%%%%%%%%%%%%%%%%%%%%%%%%%%%%%%%%%%%%%%%%%%%%%%%%%%%%%%%%%%%%%%%%%%%%%%%%%%%%%%%%%%%
\section{Sequentially Ordered Inner~Product}\label{Sect-ProofTh1}
%%%%%%%%%%%%%%%%%%%%%%%%%%%%%%%%%%%%%%%%%%%%%%%%%%%%%%%%%%%%%%%%%%%%%%%%%%%%%%%%%%%%%%%%%

\begin{definition}[{Sequentially ordered Sobolev inner product}]\label{DefIPseqOrd}
 Consider a discrete Sobolev inner product in the general form  \eqref{GeneralSIP} and assume $d_1\leq d_2\leq \dots\leq d_N$ without loss of generality. We say that a discrete Sobolev inner product is \emph{sequentially ordered} if the conditions
	\begin{equation*}%\label{CondSeqOrdIP}
		\Delta_{k}\cap \inter{\ch{\cup_{i=0}^{k-1}\Delta_i}} =\emptyset, \quad \quad k=1,2,\dots, d_N,
	\end{equation*}		
	hold, where
\begin{equation}\label{DeltaDef}
  \Delta_k=\funD{\ch{\supp{\mu}\cup\{c_j: \lambda_{j,0}>0\}}}{k=0}{\ch{\{c_j: \lambda_{j,k}>0\}}}{1\leq k\leq d_N.}
\end{equation}
\end{definition}

Note that $\Delta_k$ is the convex hull of the support of the measure associated with the $k$-th order derivative in the Sobolev inner product   \eqref{GeneralSIP}. Let us see two~examples.

\begin{example}[Sequentially ordered inner product]\label{ExSeqOrd} 
Let
 \begin{align*}
   \IpS{f}{g}= & \int_{0}^{\infty} f(x) g(x) e^{-x}dx+10f(-1)g(-1)+5f^{\prime}(-3) g^{\prime}(-3) \\
    & +5f^{\prime}(-9) g^{\prime}(-9)+20f^{\prime\prime\prime}(-10) g^{\prime\prime\prime}(-10),
 \end{align*}
 then the corresponding fifth-degree Sobolev orthogonal polynomial has the following exact expression
\begin{align*}
S_5(x)=&x^5+\frac{380961336355365}{16894750106161}x^4+\frac{1836311881214045}{16894750106161}x^3-\frac{7830454972601355}{16894750106161}x^2\\
&-\frac{36972053870326650}{16894750106161}x-\frac{22386262325875230}{16894750106161},
\end{align*}
 whose zeros are approximately $\xi_{1}\approx 4.46 $, $\xi_{2}\approx- 0.74$, $\xi_{3}\approx -2.8$, $\xi_{4}\approx  -11.74+2.51i$ and $\xi_{5}\approx -11.74-2.51i$. Note that four of them are outside of $(0,\infty)$ and two are even complex.
\end{example}

\begin{example}[Non-sequentially ordered inner product]\label{ExNoSeqOrd} Let $$\dsty  \langle f,g \rangle=  \int_{0}^{\infty} f(x) g(x) \,e^{-x}dx+f^{\prime}(-15) g^{\prime}(-15)+f^{\prime\prime}(-9) g^{\prime\prime}(-9), $$then the corresponding fifth-degree Sobolev orthogonal polynomial has the following exact expression
\begin{align*}
  S_5(x)= & x^5+\frac{55079160}{21682477}x^4-\frac{5053767275}{21682477}x^3+\frac{40953207555}{21682477}x^2 \\
    & -\frac{98030649090}{21682477}x+\frac{42523040550}{21682477},
\end{align*}
whose zeros are approximately $\xi_{1}\approx 0.55 $, $\xi_{2}\approx  3.36$, $\xi_{3}\approx 6.66+3.02i$, $\xi_{4}\approx  6.66-3.02i$ and $\xi_{5}\approx -19.77 $. Note that, in~spite of Theorem \ref{Th_ZerosSimp}, $d^*=2$ and three of the zeros of $S_5$ are outside of $(0,\infty)$, with two of them as not even {real.} 
\end{example}

In the sequentially ordered example (Example \ref{ExSeqOrd}), $S_5$ has exactly $1=5-4=n-d^*$ simple zeros on the interior of the convex hull of the support of the Laguerre measure $(0,\infty)$, and thus, the bound of Theorem \ref{Th_ZerosSimp} is sharp. In~addition, this example shows that the remaining $d^*$ zeros might even be complex, although~Corollary \ref{CoroLocZeros} shows that this does not happen when $n$ is sufficiently~large.

On the other hand, in~the non-sequentially ordered example (Example \ref{ExNoSeqOrd}), this condition is not satisfied, since $S_5$ has only $2<3=5-2=n-d^*$ zeros on $(0,\infty)$, showing that the sequential order plays a main role in the localization of the zeros of $S_n$, at~least to obtain this property for every value of $n$.

Throughout the remainder of this section, we will consider inner products of the form  \eqref{GeneralSIP} that are sequentially ordered. The~next lemma is an extension of (\cite{LoPiPe01}, Lemma 2.1) and (\cite{AbelIgnPij20}, Lemma 3.1).

\begin{lemma}\label{Lem-CoroRolle}
	Let $\{I_i\}_{i=0}^m$ be a set of $m+1$ intervals on the real line and let $P$ be a polynomial with real coefficients of degree $\geq m$.  {If} 
 \begin{align}\label{CondSeqOrdSet}
		I_{k}\cap \inter{\ch{\cup_{i=0}^{k-1}I_i}}=\emptyset, \quad \quad k=1,2,\dots, m,
	\end{align}
	then
\begin{align}\nonumber
		\Nzero{P}{J}+\Ncero{P}{I_0\setminus J}+\sum_{i=1}^m \Ncero{P^{(i)}}{I_i}\leq & \ \Nzero{P^{(m)}}{J}\\ \label{IneqRollGeneral}
		  &+\Ncero{P^{(m)}}{\ch{\cup_{i=0}^{m}I_i}\setminus J}+m,
	\end{align}
	for every closed subinterval $J$ of $\inter{I_0}$ (both empty set and unitary sets are assumed to be intervals). Here, given a real set $A$ and a polynomial $P$, $\Ncero{P}{A}$ denotes the number of values where the polynomial $P$ vanishes on $A$ (i.e., zeros of $P$ on $A$ without counting multiplicities), and $\Nzero{P}{A}$ denotes the total number of zeros (counting multiplicities) of $P$ on $A$.
\end{lemma}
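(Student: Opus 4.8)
The plan is to prove the inequality by induction on $m$, the number of "extra" intervals, following the strategy of the Rolle-type arguments in \cite{LoPiPe01} and \cite{AbelIgnPij20} but carefully accounting for the fact that $\Delta_k$ for $k\geq 1$ may be a nondegenerate interval rather than a single point.

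\textbf{Base case.} For $m=0$ there are no derivative terms and no condition \eqref{CondSeqOrdSet}, and the claimed inequality reduces to
$\Nzero{P}{J}+\Ncero{P}{I_0\setminus J}\leq \Nzero{P}{J}+\Ncero{P}{I_0\setminus J}$, which is trivial. (One should also record the genuinely useful case $m=0$ with $J=\emptyset$, or rather handle $m=0$ just as the tautology above.)

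\textbf{Inductive step.} Assume the statement holds for $m-1$ intervals and prove it for $m$. The key geometric observation from \eqref{CondSeqOrdSet} with $k=m$ is that $I_m$ meets the interior of $H:=\ch{\cup_{i=0}^{m-1}I_i}$ in at most its endpoint; hence $I_m$ lies, except possibly for one shared boundary point, entirely on one side of $H$. Consequently $\ch{\cup_{i=0}^m I_i}$ is obtained from $H$ by adjoining the interval $I_m$ on one end, and any zero of $P'$ produced by Rolle between a zero of $P$ in $H$ and a zero of $P$ in $I_m$ must fall in the "gap" between $H$ and $I_m$, which is disjoint from $\inter{H}$. The plan is: first apply the ordinary mean-value/Rolle count to $P$ on the set $\ch{\cup_{i=0}^{m}I_i}$ to pass from zeros of $P$ to zeros of $P'$, separating the contribution of $I_0\setminus J$, of $J$, and of the $I_i$ with $1\le i\le m$; the number of sign changes of $P$ across all these pieces is at least
$\Nzero{P}{J}+\Ncero{P}{I_0\setminus J}+\sum_{i=1}^{m}\Ncero{P^{(i)}}{I_i} - m$ after one differentiation handles the highest block $I_m$ (this is where we "spend" one of the $m$ units and remove the term $\Ncero{P^{(m)}}{I_m}$ by absorbing it into $\Ncero{P'}{\,\cdots\setminus J}$). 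Then apply the induction hypothesis to $P'$ (degree $\geq m-1$) with the $m$ intervals $I_0,\dots,I_{m-1}$, which still satisfy \eqref{CondSeqOrdSet}, to convert $\Nzero{P'}{J}+\Ncero{P'}{I_0\setminus J}+\sum_{i=1}^{m-1}\Ncero{(P')^{(i)}}{I_i}$ into $\Nzero{P^{(m)}}{J}+\Ncero{P^{(m)}}{\ch{\cup_{i=0}^{m-1}I_i}\setminus J}+(m-1)$. Adding the one unit spent at the top gives the $+m$ and the stated right-hand side, after checking that $\ch{\cup_{i=0}^{m-1}I_i}\setminus J \subset \ch{\cup_{i=0}^{m}I_i}\setminus J$ so the bound only weakens in the allowed direction.

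\textbf{Main obstacle.} The delicate point is the bookkeeping of zeros \emph{without} multiplicities ($\mathbf{N}_o$) versus \emph{with} multiplicities ($\mathbf{N}_z$), together with the shared-endpoint subtlety: when $I_m$ and $H$ share a boundary point $c$, a zero of $P$ at $c$ could be double-counted, and a high-order zero of $P$ at $c$ interacts with the derivative counts on both sides. I would handle this by treating such a common endpoint as belonging to exactly one of the two intervals (a convention already implicit in the lemma's parenthetical that singletons count as intervals), and by using the elementary fact that between two points where $P$ vanishes — counted appropriately — $P'$ vanishes at an interior point, while at a point of multiplicity $r$ for $P$ the derivative $P'$ has multiplicity $r-1$ there; summing these local contributions across the disjointly-arranged blocks $I_0\setminus J$, $J$, $I_1,\dots,I_m$ and the gaps between consecutive blocks yields exactly the loss of one unit per differentiation. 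A second, minor obstacle is ensuring $P'$ still has degree $\geq m-1$, which is immediate since $\deg P\geq m$. Once the $m=1$ instance is written out carefully (this is essentially $(\cite{AbelIgnPij20}$, Lemma 3.1) with an interval in place of a point), the general case follows by the induction above.
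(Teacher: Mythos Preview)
Your inductive step has a genuine index-mismatch error. You propose to differentiate once at the bottom (apply Rolle to $P$ to get $P'$) and then apply the induction hypothesis to $P'$ with the intervals $I_0,\dots,I_{m-1}$. But the induction hypothesis for $P'$ on those intervals bounds
\[
\Nzero{P'}{J}+\Ncero{P'}{I_0\setminus J}+\sum_{i=1}^{m-1}\Ncero{(P')^{(i)}}{I_i}
=\Nzero{P'}{J}+\Ncero{P'}{I_0\setminus J}+\sum_{i=1}^{m-1}\Ncero{P^{(i+1)}}{I_i},
\]
i.e.\ it controls $P^{(2)}$ on $I_1$, $P^{(3)}$ on $I_2$, \dots, $P^{(m)}$ on $I_{m-1}$. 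These are \emph{not} the terms on the left-hand side of \eqref{IneqRollGeneral}, which are $P^{(1)}$ on $I_1$, $P^{(2)}$ on $I_2$, \dots, $P^{(m)}$ on $I_m$. The first Rolle step only touches the $P$-terms on $I_0$ and $J$; it does nothing to realign the sum $\sum_{i=1}^m\Ncero{P^{(i)}}{I_i}$, and your claim that this step ``removes the term $\Ncero{P^{(m)}}{I_m}$ by absorbing it into $\Ncero{P'}{\cdots\setminus J}$'' conflates $P'$ with $P^{(m)}$.

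The paper's induction goes the other way: keep $P$ fixed, drop the \emph{last} interval $I_m$, and apply the hypothesis for $m-1$ to $P$ with $I_0,\dots,I_{m-1}$. This bounds exactly the first $m$ terms of the left side by $\Nzero{P^{(m-1)}}{J}+\Ncero{P^{(m-1)}}{\ch{\cup_{i=0}^{m-1}I_i}\setminus J}+(m-1)$. Then one applies the single-step Rolle inequality
\[
\Nzero{Q}{J}+\Ncero{Q}{I\setminus J}\le \Nzero{Q'}{J}+\Ncero{Q'}{\inter{I}\setminus J}+1
\]
to $Q=P^{(m-1)}$ with $I=\ch{\cup_{i=0}^{m-1}I_i}$, and finally adds $\Ncero{P^{(m)}}{I_m}$, using \eqref{CondSeqOrdSet} only to ensure $I_m$ is disjoint from $\inter{\ch{\cup_{i=0}^{m-1}I_i}}$ so the two $\Ncero{P^{(m)}}{\cdot}$ counts merge into $\Ncero{P^{(m)}}{\ch{\cup_{i=0}^{m}I_i}\setminus J}$. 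No shared-endpoint convention or careful block bookkeeping is needed; the whole argument is three lines once the single-step inequality is stated.
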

\begin{proof}
	First, we point out the following consequence of Rolle's Theorem. If~$I$ is a real interval and $J$ is a closed subinterval of $\inter{I}$, then
\begin{align}\label{IneqRoll}
		\Nzero{P}{J}+\Ncero{P}{I\setminus J}\leq \Nzero{P'}{J}+ \Ncero{P'}{\inter{I}\setminus J}+1.
	\end{align}	
 It is easy to see that	 \eqref{IneqRollGeneral} holds for $m=0$. We now proceed by induction on $m$. Suppose that we have  $m+1$ intervals  $\{I_i\}_{i=0}^m$ satisfying \eqref{CondSeqOrdSet}; thus, the first $m$ intervals $\{I_i\}_{i=0}^{m-1}$ also satisfy  \eqref{CondSeqOrdSet}, and we obtain  \eqref{IneqRollGeneral} by induction  hypothesis (taking $m-1$ instead of $m$). Then
	\begin{align*}
		&\Nzero{P}{J}+\Ncero{P}{I_0\setminus J}+\sum_{i=1}^m \Ncero{P^{(i)}}{I_i},\\
			&\leq\Nzero{P^{(m-1)}}{J}+\Ncero{P^{(m-1)}}{\ch{\cup_{i=0}^{m-1}I_i}\setminus J}+m-1+\Ncero{P^{(m)}}{I_m},\\
			&\leq \Nzero{P^{(m)}}{J}+\Ncero{P^{(m)}}{\inter{\ch{\cup_{i=0}^{m-1}I_i}}\setminus J}+m+\Ncero{P^{(m)}}{I_m},\\
			&\leq \Nzero{P^{(m)}}{J}+\Ncero{P^{(m)}}{\ch{\cup_{i=0}^{m}I_i}\setminus J}+m,
	\end{align*}
	where in the second inequality, we have used  \eqref{IneqRoll}.
\end{proof}

As an immediate consequence of   Lemma \ref{Lem-CoroRolle}, the~following result is~obtained.

\begin{lemma}
Under the assumptions of  Lemma \ref{Lem-CoroRolle}, we have
\begin{align}\label{IneqRollzerocero}
\Nzero{P}{J}+\Ncero{P}{I_0\setminus J}+\sum_{i=1}^m \Ncero{P^{(i)}}{I_i}\leq \deg{P}
\end{align}
for every $J$ closed subinterval of $\inter{I_0}$. In~particular, for~$J=\emptyset$, we obtain
\begin{align}\label{IneqRollcero}
\sum_{i=0}^m \Ncero{P^{(i)}}{I_i}\leq \deg{P}.
\end{align}
\end{lemma}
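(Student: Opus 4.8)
The plan is to derive \eqref{IneqRollzerocero} directly from \eqref{IneqRollGeneral} by choosing $m$ in Lemma~\ref{Lem-CoroRolle} to be the largest index for which the hypotheses are genuinely used, and then bounding the right-hand side of \eqref{IneqRollGeneral} by $\deg P$. Concretely, apply Lemma~\ref{Lem-CoroRolle} to the same family $\{I_i\}_{i=0}^m$ and the same closed subinterval $J\subset\inter{I_0}$. The inequality \eqref{IneqRollGeneral} gives
\[
\Nzero{P}{J}+\Ncero{P}{I_0\setminus J}+\sum_{i=1}^m \Ncero{P^{(i)}}{I_i}\leq \Nzero{P^{(m)}}{J}+\Ncero{P^{(m)}}{\ch{\cup_{i=0}^{m}I_i}\setminus J}+m.
\]
So the whole task reduces to showing that the right-hand side is at most $\deg P$.

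The key observation is that $\Nzero{P^{(m)}}{J}$ counts zeros of $P^{(m)}$ with multiplicity, while $\Ncero{P^{(m)}}{\ch{\cup_{i=0}^{m}I_i}\setminus J}$ counts zeros of $P^{(m)}$ without multiplicity on a set disjoint from $J$; since $J$ and $\ch{\cup_{i=0}^{m}I_i}\setminus J$ are disjoint, the sum $\Nzero{P^{(m)}}{J}+\Ncero{P^{(m)}}{\ch{\cup_{i=0}^{m}I_i}\setminus J}$ is bounded above by the total number of zeros of $P^{(m)}$ counted with multiplicity on all of $\RR$, which is $\deg P^{(m)}=\deg P-m$ (here the hypothesis $\deg P\geq m$ guarantees $P^{(m)}$ is not identically zero, so this count is exactly $\deg P-m$; if $\deg P=m$ then $P^{(m)}$ is a nonzero constant and both terms vanish). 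Adding the $+m$ then yields the bound $\deg P$, which is precisely \eqref{IneqRollzerocero}. Taking $J=\emptyset$ and noting that $\Nzero{P}{\emptyset}=0$ and $\Ncero{P}{I_0\setminus\emptyset}=\Ncero{P}{I_0}$ collapses the left-hand side to $\sum_{i=0}^m \Ncero{P^{(i)}}{I_i}$, giving \eqref{IneqRollcero}.

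I do not anticipate a real obstacle here; the only point requiring a little care is the bookkeeping of ``with multiplicity'' versus ``without multiplicity,'' and making sure the two sets appearing on the right of \eqref{IneqRollGeneral} are disjoint so that their zero counts can be added without overcounting — which they are by construction, $J$ being removed from $\ch{\cup_{i=0}^m I_i}$ in the second term. One should also remark that $J\subset\inter{I_0}\subset\ch{\cup_{i=0}^m I_i}$, so that $\Nzero{P^{(m)}}{J}$ indeed counts zeros inside the ambient interval and the union $J\cup\big(\ch{\cup_{i=0}^m I_i}\setminus J\big)$ is contained in $\RR$; hence the combined count does not exceed $\deg P^{(m)}$. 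This completes the argument.
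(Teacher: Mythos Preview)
Your proposal is correct and is exactly what the paper intends: it states the lemma as ``an immediate consequence of Lemma~\ref{Lem-CoroRolle}'' without further proof, and your argument---applying \eqref{IneqRollGeneral} and bounding $\Nzero{P^{(m)}}{J}+\Ncero{P^{(m)}}{\ch{\cup_{i=0}^{m}I_i}\setminus J}$ by $\deg P^{(m)}=\deg P-m$---is precisely the one-line justification that makes this immediacy explicit.
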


\begin{lemma}\label{PolMiDeg}
Let $\{(r_i,\nu_i)\}_{i=1}^M\subset \RR\times\ZZp$ be a set of $M$ ordered pairs. Then, there exists a unique monic polynomial $U_M$ of minimal degree (with $0\leq \deg{U_M}\leq M$), such that
\begin{align}\label{cond}
		U_M^{(\nu_i)}(r_i)=0, \quad i=1,2,\dots,M.
	\end{align}
	
	 {Furthermore}, if~the intervals $I_k=\ch{\{r_i: \nu_i=k\}}$, $k=0,1,2,\dots,\nu_M$, satisfy  \eqref{CondSeqOrdSet}, then
 $U_M$ has degree  $\mathfrak{u}_{M}=\min \mathfrak{I}_{M}-1$, where $$\mathfrak{I}_{M}=\{i: 1\leq i\leq M \text{ and } \nu_i\geq i\}\cup \{M+1\}.$$
\end{lemma}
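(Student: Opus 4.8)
The plan is to split the statement into two parts: existence/uniqueness of the minimal monic polynomial $U_M$ satisfying \eqref{cond}, and the exact degree computation under the sequential-order hypothesis. For the first part, I would argue exactly as for $S_n$ in the introduction: for each $d$ with $0\le d\le M$, asking a monic polynomial of degree $d$ to satisfy the $M$ linear conditions \eqref{cond} is a linear system in the $d$ free (lower-order) coefficients; taking $d=M$ gives $M$ equations in $M$ unknowns, which always has a solution (e.g.\ by a Hermite-type interpolation argument, or simply because the homogeneous system for degree-$\le M$ polynomials — dropping monicity — has a nontrivial solution of degree $\le M$, and if that solution were identically zero the conditions would be vacuous). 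Uniqueness of the \emph{minimal}-degree monic solution follows because the difference of two monic solutions of the same minimal degree $d$ would be a solution of degree $<d$ contradicting minimality; I would phrase this carefully since two monic solutions of different degrees can coexist, so "minimal" must be built into the uniqueness claim as stated.

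For the degree computation, the key object is $\mathfrak{I}_M=\{i:1\le i\le M,\ \nu_i\ge i\}\cup\{M+1\}$ and the claim $\deg U_M=\mathfrak{u}_M:=\min\mathfrak{I}_M-1$. Write $m^\ast=\min\mathfrak{I}_M$, so I must show $U_M$ has degree exactly $m^\ast-1$. For the upper bound $\deg U_M\le m^\ast-1$: since $m^\ast\in\mathfrak{I}_M$, either $m^\ast=M+1$ (and then $m^\ast-1=M$, already known), or $\nu_{m^\ast}\ge m^\ast$; in the latter case I would exhibit a monic polynomial of degree $m^\ast-1$ meeting the first $m^\ast-1$ conditions $U^{(\nu_i)}(r_i)=0$ for $i<m^\ast$ (possible: $m^\ast-1$ conditions on a monic polynomial with $m^\ast-1$ free coefficients) and then check that the remaining conditions $i\ge m^\ast$ are automatically satisfied because $\nu_i\ge\nu_{m^\ast}\ge m^\ast>\deg$, so the $\nu_i$-th derivative is already identically zero. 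Here I use the ordering $d_1\le\cdots$, i.e.\ the $\nu_i$ are nondecreasing, which is what makes $\nu_i\ge\nu_{m^\ast}$ for $i\ge m^\ast$ — I should make sure the statement indeed assumes the pairs are ordered so that $\nu_1\le\nu_2\le\cdots\le\nu_M$; if it only follows from the $d_j$-ordering I would note that reduction.

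The main obstacle is the lower bound $\deg U_M\ge m^\ast-1$, and this is exactly where the sequential-order hypothesis \eqref{CondSeqOrdSet} on the intervals $I_k=\ch{\{r_i:\nu_i=k\}}$ enters. Suppose for contradiction $P:=U_M$ has degree $d\le m^\ast-2$. For each $i<m^\ast$ we have $\nu_i\le i-1\le m^\ast-2$, hence $i\le d+1$... more precisely, the indices $i$ with $i\le m^\ast-1$ all satisfy $\nu_i<i$, and I want to count the zeros they force. Partition these conditions by derivative order: condition $(r_i,\nu_i)$ contributes a zero of $P^{(\nu_i)}$ lying in $I_{\nu_i}$. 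Applying \eqref{IneqRollcero} from the second lemma (with $m=\nu_M$ or the relevant truncation), $\sum_{k=0}^{\nu_M}\Ncero{P^{(k)}}{I_k}\le\deg P=d$. On the other hand, the number of conditions among $i=1,\dots,m^\ast-1$ is $m^\ast-1\ge d+1$, and — here is the delicate combinatorial point — because $\nu_i<i$ for each such $i$ and the $r_i$ are distinct, each interval $I_k$ receives at most "enough" conditions that the total count of \emph{distinct} zeros forced is still $\ge d+1>d$, contradicting \eqref{IneqRollcero}. Making this counting rigorous — tracking multiplicities versus distinct zeros, and using $\nu_i<i$ together with the nondecreasing arrangement to guarantee that the $m^\ast-1$ conditions cannot "collapse" onto fewer than $d+1$ independent vanishing requirements — is the heart of the argument; I would handle it by an induction on $M$ paralleling the induction in Lemma \ref{Lem-CoroRolle}, peeling off the last pair $(r_M,\nu_M)$ and distinguishing whether $M\in\mathfrak{I}_M$ or not, so that $\mathfrak{u}_M$ either equals $\mathfrak{u}_{M-1}$ or increases by one in lockstep with an added independent condition.
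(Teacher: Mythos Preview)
Your plan matches the paper's proof: existence and uniqueness via the homogeneous linear system plus minimality-of-degree argument, and then the degree formula by induction on $M$, peeling off the last pair and invoking \eqref{IneqRollcero}. The paper organizes the inductive step by the dichotomy $\mathfrak{u}_M=M$ versus $\mathfrak{u}_M<M$ (equivalently: no $i\le M$ has $\nu_i\ge i$, versus some does); in the second case one checks $\deg U_{M-1}=\mathfrak{u}_{M-1}=\mathfrak{u}_M\le\nu_M-1$, so $U_{M-1}^{(\nu_M)}\equiv 0$ and hence $U_M=U_{M-1}$, while in the first case \eqref{IneqRollcero} forces $\deg U_M\ge M$. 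This is essentially your ``$M\in\mathfrak{I}_M$ or not'' split.

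One point to tighten: your parenthetical justification for the upper bound, ``$m^\ast-1$ conditions on a monic polynomial with $m^\ast-1$ free coefficients'', does not stand on its own, since a square \emph{non-homogeneous} linear system need not be consistent. The immediate fix is to invoke the existence part of the lemma applied to the subfamily $\{(r_i,\nu_i)\}_{i=1}^{m^\ast-1}$: this gives a monic $U$ of degree $\le m^\ast-1$, and then, as you say, $\nu_i\ge\nu_{m^\ast}\ge m^\ast>\deg U$ for $i\ge m^\ast$ (using the implicit ordering $\nu_1\le\cdots\le\nu_M$) makes the remaining conditions automatic. This is exactly what the paper's second case accomplishes one step at a time inside the induction.
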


\begin{proof}
	The existence of a nonidentical zero polynomial with degree $\leq M$ satisfying  \eqref{cond} reduces to solving a homogeneous linear system with $M$ equations and $M+1$ unknowns (its coefficients). Thus, a~non-trivial solution always exists. In~addition, if~we suppose that there exist two different minimal monic polynomials $U_M$ and $\widetilde{U}_M$, then  the polynomial $\widehat{U}_M=U_M-\widetilde{U}_M$ is 	not identically zero, it satisfies  \eqref{cond}, and~ $\deg{\widehat{U}_M}<\deg{U_M}$. Thus, if~we divide $\widehat{U}_M$ by its leading coefficient, we reach a~contradiction.

	The rest of the proof runs by induction on the number of points $M$.  For~$M=1$, the~result follows taking
		$$U_1(x)=\funD{x-r_1}{\nu_1=0}{1}{\nu_1\geq 1.}$$

	Suppose that,  for~each sequentially ordered sequence  of $M$ ordered pairs,  the~corresponding minimal  polynomial $U_M$ has degree $\mathfrak{u}_{M}$.

	Let $\{(r_i,\nu_i)\}_{i=1}^{M}$ be a set of $M$ ordered pairs satisfying  \eqref{CondSeqOrdSet}. Obviously,  $\{(r_i,\nu_i)\}_{i=1}^{M-1}$ also satisfies  \eqref{CondSeqOrdSet}  and $U_M$ satisfies \eqref{cond} for $i=1,2,\dots,M-1$; thus, we obtain  $\deg{U_{M-1}}=\mathfrak{u}_{M-1}$ and $\deg{U_{M}}\geq \deg{U_{M-1}}$.  Now, we divide the proof into two~cases:
\begin{enumerate}
	\item If $\mathfrak{u}_{M}=M$, then for all $1\leq i\leq M$ we have $ \nu_i< i$, which yields
		\begin{equation*}
			\deg{U_{M}}\geq \deg{U_{M-1}}=\mathfrak{u}_{M-1}=M-1 \geq \nu_{M}.
		\end{equation*}
		Since $\{(r_i,\nu_i)\}_{i=1}^{M}$ satisfies \eqref{CondSeqOrdSet}, from~\eqref{IneqRollcero} we obtain
			$$ M\leq \sum_{i=0}^{\nu_{M}} \Ncero{U_{M}^{(i)}}{I_i} \leq  \deg{U_{M}},$$
		which implies that  $\deg{U_{M}}=M=\mathfrak{u}_{M}$.
	\item  If $\mathfrak{u}_{M}\leq M-1$, then  there exists a minimal $j$ ($1\leq j\leq M$), such that $\nu_j\geq j$,  and~$ \nu_i< i$ for all 				$1\leq i\leq j-1$. Therefore,  $\mathfrak{u}_{M}=j-1=\mathfrak{u}_{M-1}$. From~the induction hypothesis, we obtain
			$$\deg{U_{M-1}}=\mathfrak{u}_{M-1}=j-1\leq \nu_j-1\leq \nu_{M}-1,$$
		which gives  $U^{(\nu_{M})}_{M-1}\equiv 0$. Hence,  $U_{M}\equiv U_{M-1}$ and, consequently, we obtain
			$$\deg{U_{M}}=\deg{U_{M-1}}=\mathfrak{u}_{M-1}=\mathfrak{u}_{M}.$$
\end{enumerate}
\end{proof}

Note that, in~Lemma \ref{PolMiDeg},  condition \eqref{CondSeqOrdSet} is necessary for asserting that the polynomial $U_M$ has degree $\mathfrak{u}_{M}$. If~we consider $\{(-1,0),(1,0),(0,1)\}$,  whose corresponding convex hulls $I_0=[-1,1]$ and $I_1=\{0\}$ do not satisfy \eqref{CondSeqOrdSet}, we obtain $U_3(x)=x^2-1$ and $\mathfrak{u}_{3}=3\neq \deg{U_3}$.

Now we are able to prove the zero localization theorem for sequentially ordered discrete Sobolev inner~products.

\begin{proof}[Proof of Theorem \ref{Th_ZerosSimp}]
	Let  $\xi_1<\xi_2<\cdots <\xi_{\eta}$ be the points on $(a,b)=\inter{\ch{\supp{\mu}}}$ where $S_n$ changes sign and suppose that $\eta<n-d^*$. Consider the set of ordered pairs
	$$\{(r_i,\nu_i)\}_{i=1}^{d^*+\eta}=\{(\xi_i,0)\}_{i=1}^{\eta}\cup\{(c_j,k): \eta_{j,k}>0, \ j=1,2,\dots,N, k=1,\dots,d_j\}.$$
	
	 {Since}  $\IpS{\cdot}{\cdot}$ is sequentially ordered, the~intervals $I_k=\Delta_k$ for $k=0,1,\dots,\nu_N$ (see \eqref{DeltaDef})  satisfy \eqref{CondSeqOrdSet} (we can assume without loss of generality that $\nu_1\leq\nu_2\leq \cdots\leq \nu_{d^*+\eta}$). Consequently, from~Lemma \ref{PolMiDeg}, there exists a unique monic polynomial $U_{d^*+\eta}$ of minimal degree, such that
\begin{align}\label{CondNull}
		U_{d^*+\eta}(\xi_i)&=0;\qquad\text{for } i=1,\dots, \eta,\nonumber\\
		U_{d^*+\eta}^{(k)}(c_j)&=0;\qquad\text{for each } (j,k): \eta_{j,k}>0,
	\end{align}
	and  $\dsty 		\deg{U_{d^*+\eta}}=\min \mathfrak{I}_{d^*+\eta}-1\leq d^*+\eta$,	where
\begin{equation}\label{DegQ}
		\mathfrak{I}_{d^*+\eta}=\{i: 1\leq i\leq d^*+\eta \text{ and } \nu_i \geq i\}\cup \{d^*+\eta+1\}.
	\end{equation}
	
	 {Now}, we need to consider the following 	two~cases.
	\begin{enumerate}
		\item If $\deg{U_{d^*+\eta}}=d^*+\eta$, from~\eqref{DegQ},  we obtain $\deg{U_{d^*+\eta}}\geq\nu_{\eta+d^*}+1$. 					Thus, taking the closed interval $J=[\xi_1,\xi_\eta]\subset (a,b)$ in \eqref{IneqRollzerocero},  we obtain
			\begin{align*}
				d^*+\eta\leq&\sum_{k=0}^{\nu_{d^*+\eta}} \Ncero{U_{d^*+\eta}^{(k)}}{I_k} \leq \Nzero{U_{d^*+\eta}}{[\xi_1,\xi_\eta]}+\Ncero{U_{d^*+\eta}}{I_0\setminus [\xi_1,\xi_\eta]}\\
				  &+\sum_{k=1}^{\nu_{d^*+\eta}} \Ncero{U_{d^*+\eta}^{(k)}}{I_k}\leq   \deg{U_{d^*+\eta}}=d^*+\eta.
			\end{align*}		
		\item If $\deg{U_{d^*+\eta}}<d^*+\eta$,  from~\eqref{DegQ},  there exists $1\leq j\leq d^*+\eta$ such that $\deg{U_{d^*+\eta}}=j-1$, $\nu_{j}\geq j$ and $\nu_i\leq i-1$ for $i=1,2,\dots,j-1$. Hence, $$\nu_{j-1}+1\leq j-1=\deg{U_{d^*+\eta}}$$  and, again, from~\eqref{IneqRollzerocero} we have
			\begin{align*}
				j-1\leq&\sum_{k=0}^{\nu_{j-1}} \Ncero{U_{d^*+\eta}^{(k)}}{I_k}\leq \Nzero{U_{d^*+\eta}}{[\xi_1,\xi_\eta]}+\Ncero{U_{d^*+\eta}}{I_0\setminus [\xi_1,\xi_\eta]}\\
				  &+\sum_{k=1}^{\nu_{j-1}} \Ncero{U_{d^*+\eta}^{(k)}}{I_k} \leq \deg{U_{d^*+\eta}}=j-1.
			\end{align*}
		\end{enumerate}
		
	 {In}  both cases, we obtain that $U_{d^*+\eta}$ has no other zeros in $I_0$ than those given by construction, and from $\Ncero{U_{d^*+\eta}}{[\xi_1,\xi_\eta]}=\Nzero{U_{d^*+\eta}}{[\xi_1,\xi_\eta]}$, all the zeros of $S_n$ on $\inter{I}$ are simple. Thus, in~addition to \eqref{CondNull}, we obtain that $S_nU_{d^*+\eta}$ does not change sign on $\inter{I}$. Now, since $\deg{U_{d^*+\eta}}\leq d^*+\eta<n$, we arrive at  the contradiction
	\begin{align*}
 		0&=\langle S_n, U_{d^*+\eta}\rangle=\int S_n(x) U_{d^*+\eta}(x)  d\mu(x) +\sum_{j=1}^{N}\sum_{k=0}^{d_j}\lambda_{j,k} S_n^{(k)}(c_{j}) U_{d^*+\eta}^{(k)}(c_{j})\\
 			&=\int_a^b S_n(x) U_{d^*+\eta}(x)   d\mu(x)\neq0.
	\end{align*}
\end{proof}

\section{Auxiliary~Results}\label{Sec-Laguerre}

The family of Laguerre polynomials is one of the three very well-known classical orthogonal polynomials families (see~\cite{Chi78,Freud71,Szego75}).
It consists of the sequence of polynomials $\{L_n^{(\alpha)}\}$ that are orthogonal with respect to the measure $d\mu=x^{\alpha}e^{-x}dx$,  $x\in(0,\infty)$, for~$\alpha >-1$, and that are normalized by taking $\frac{(-1)^{n}}{n!}$ as the leading coefficient of the $n$-th degree polynomial of the sequence.   Laguerre polynomials play a key role in applied mathematics and physics, where they are involved in the solutions of the wave equation of the hydrogen atom (c.f.~\cite{SchRat02}).

Some of the structural properties of this family are listed in the following proposition in order to be used~later.

\begin{proposition}%\label{ProLagu}
 Let $\{L_{n}^{(\alpha )}\}_{n\geq 0}$ (note the brackets in  parameter $\alpha $) be the sequence of Laguerre polynomials and let $\{L_{n}^{\alpha}\}_{n\geq 0}$ be the monic sequence of Laguerre polynomials. Then, the~following statements~hold.
\begin{enumerate}
\item For every $n\in \mathbb{N},$%
\begin{align}
L_{n}^{(\alpha )}\left( x\right) =\frac{\left( -1\right) ^{n}}{n!}%
L_{n}^{\alpha }\left( x\right) .  \label{Lead-Coeff}
\end{align}
\item Three-term recurrence relation. For~every $n\geq 1$,%
\begin{align*}%\label{3TRRLagMonic}
 xL^{\alpha}_{n}(x)&=L^{\alpha}_{n+1}(x)+(2n+\alpha+1)L^{\alpha}_n(x)+n(n+\alpha)L^{\alpha}_{n-1}(x) \\
xL_{n}^{(\alpha )}(x)&=-(n+1)L_{n+1}^{(\alpha )}(x)+(2n+\alpha +1)L_{n}^{(\alpha )}(x)-(n+\alpha)L_{n-1}^{(\alpha )}(x)%\label{RRLagu}
\end{align*}

with $L_{-1}^{(\alpha )}\equiv L_{-1}^{\alpha }=0$, and~$L_{0}^{(\alpha )}\equiv L_{0}^{\alpha }\equiv 1$.

\item Structure relation. For~every $n\in \mathbb{N},$%
\begin{equation*}
L_{n}^{(\alpha )}(x)=L_{n}^{(\alpha +1)}\left( x\right) -L_{n-1}^{(\alpha
+1)}\left( x\right) . 
\end{equation*}

\item For every $n\in \mathbb{N},$%
\begin{equation}\label{NormLag}
||L_{n}^{(\alpha )}||_{\mu }^{2}=\Gamma (\alpha +1)\binom{n+\alpha }{n}=\frac{\Gamma(\alpha+n+1)}{n!}.
\end{equation}%
In addition, we have 
\begin{equation*} 
||L_{n}^{\alpha }||_{\mu }^{2}=n!\Gamma(n+\alpha+1)
\end{equation*}

\item Hahn condition. For~every $n\in \mathbb{N},$%
\begin{equation}
\lbrack L_{n}^{(\alpha )}]^{\prime }(x)=-L_{n-1}^{(\alpha +1)}(x).
\label{Derivada de Ln}
\end{equation}

\item Outer strong asymptotics (Perron's asymptotics formula on $\mathbb{C}%
\setminus \mathbb{R}_{+}$). Let $\alpha \in \mathbb{R}$. Then%
\begin{equation}
L_{n}^{(\alpha )}\left( x\right) =\frac{e^{x/2}n^{\alpha /2-1/4}e^{2\left( -nx\right)
^{1/2}}}{2\pi ^{1/2}\left(
-x\right) ^{\alpha /2+1/4}}\left\{ \sum\limits_{k=0}^{p-1}C_{k}(x)n^{-k/2}+\mathcal{O}%
(n^{-p/2})\right\} .  \label{PerronsForm}
\end{equation}

Here, $\{C_{k}(x)\}_{k=0}^{p-1}$ are certain analytic functions of $x$ independent of $n$, with~$C_0\equiv 1$. This relation holds for $x$\ in the
complex plane with a cut along the positive part of the real axis. The~bound for the
remainder holds uniformly in every closed domain with no points in common
with $x\geq 0$ (see~\cite{Szego75}, Theorem 8.22.3).

\end{enumerate}
\end{proposition}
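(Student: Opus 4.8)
The plan is to derive all six items from the classical theory of Laguerre polynomials: each is either stated outright in \cite{Szego75} (Section~5.1 for the algebraic identities, Theorem~8.22.3 for the asymptotics) or follows from it by the elementary rescaling relating the classically normalized $L_n^{(\alpha)}$ to its monic counterpart $L_n^{\alpha}$.

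First I would prove \eqref{Lead-Coeff}. The standard explicit (hypergeometric) representation of $L_n^{(\alpha)}$ shows that its leading coefficient is $(-1)^n/n!$; since $L_n^{\alpha}$ is by definition the monic polynomial proportional to $L_n^{(\alpha)}$, dividing by this leading coefficient gives $L_n^{(\alpha)} = \frac{(-1)^n}{n!}\,L_n^{\alpha}$. This one identity is the workhorse for the monic versions of the remaining statements.

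Items (2), (3), (5) and (6), in the classically normalized form, are respectively the three-term recurrence, the contiguous (structure) relation, the differentiation (Hahn) rule $[L_n^{(\alpha)}]' = -L_{n-1}^{(\alpha+1)}$, and Perron's asymptotic expansion for $L_n^{(\alpha)}$; all four are classical and I would quote them directly from \cite{Szego75}, with \eqref{PerronsForm} being verbatim its Theorem~8.22.3 (including $C_0\equiv 1$ and the uniformity of the remainder on compacta of $\CC\setminus\RRp$). To obtain the monic recurrence I would substitute \eqref{Lead-Coeff} into $xL_n^{(\alpha)} = -(n+1)L_{n+1}^{(\alpha)} + (2n+\alpha+1)L_n^{(\alpha)} - (n+\alpha)L_{n-1}^{(\alpha)}$ and cancel the prefactors $(-1)^n/n!$, being careful that the index shifts $n\mapsto n\pm 1$ change that prefactor; the coefficients then collapse to $2n+\alpha+1$ and $n(n+\alpha)$, and the initial data $L_{-1}^{\alpha}=0$, $L_0^{\alpha}=1$ transfer directly. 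For (4), the classical norm $\|L_n^{(\alpha)}\|_\mu^2 = \Gamma(\alpha+n+1)/n!$ is standard (\cite{Szego75}, Section~5.1); rewriting $\binom{n+\alpha}{n} = \Gamma(\alpha+n+1)/(\Gamma(\alpha+1)\,n!)$ gives the displayed form \eqref{NormLag}, and multiplying through by $(n!)^2$ — the square of the reciprocal leading coefficient from \eqref{Lead-Coeff} — yields $\|L_n^{\alpha}\|_\mu^2 = n!\,\Gamma(n+\alpha+1)$.

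There is essentially no obstacle here; the content is entirely classical. The only point requiring attention is consistent bookkeeping of the two normalization conventions, in particular tracking how the factor $(-1)^n/n!$ transforms under the shifts appearing in the recurrence. If a self-contained argument were preferred over citations, one could instead start from the Rodrigues formula $L_n^{(\alpha)}(x) = \frac{x^{-\alpha}e^x}{n!}\,\frac{d^n}{dx^n}\!\left(x^{n+\alpha}e^{-x}\right)$ and obtain (1)--(5) by direct differentiation and repeated integration by parts, but invoking \cite{Szego75} is shorter and entirely sufficient for the uses this proposition is put to later.
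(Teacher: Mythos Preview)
Your proposal is correct and matches the paper's approach: the paper does not supply any proof of this proposition at all, treating it as a compendium of classical facts and simply citing standard references (primarily \cite{Szego75}, with Theorem~8.22.3 for the Perron asymptotics), which is exactly what you do. Your additional remarks on how \eqref{Lead-Coeff} transfers the normalized identities to their monic counterparts are a harmless elaboration beyond what the paper spells out.
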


Now, we summarize some auxiliary  lemmas to be used in the proof of Theorem \ref{asymp} (see (\cite{DHM12}, Lem. 1)  and (\cite{MZFH12}, Prop. 6)).

\begin{lemma} \label{Lem-LagRatioAsymp} For $z\in\CC\setminus[0,\infty)$, $\alpha, \beta \in \RR$ and $j,k\geq-n$ we have
\begin{equation}\label{Lem-LagRatioAsymp-0}
\frac{L^{(\alpha+\beta)}_{n+j}(z)}{L^{(\alpha)}_{n+k}(z)}=\begin{cases} 1+\frac{(j-k)\sqrt{-z}}{\sqrt{n}}+\left(\frac{\alpha}{2}-\frac{1}{4}-z\frac{(j-k)}{2}\right)\frac{(j-k)}{n}+\mathcal{O}_z\left(n^{-\frac{3}{2}}\right) \quad &\text{ if } \beta =0\\
\left(\frac{\sqrt{n}}{\sqrt{-z}} \right)^{\beta} \left( 1+\mathcal{O}_z\left(n^{-1/2}\right)\right)\quad &\text{ if } \beta \neq 0
.\end{cases}
\end{equation}
where $\mathcal{O}_z(n^{-j})$ denotes some analytic function sequence $\{g_n(z)\}_{n=1}^{\infty}$ such that $\{n^{j}g_{n}\}$ is uniformly bounded on every compact subset of $\CC\setminus[0,\infty)$.
\end{lemma}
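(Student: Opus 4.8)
The plan is to prove Lemma \ref{Lem-LagRatioAsymp} directly from Perron's asymptotic formula \eqref{PerronsForm}, treating the two cases $\beta=0$ and $\beta\neq 0$ separately but with a common strategy: write the ratio of the two Laguerre polynomials using \eqref{PerronsForm} for numerator and denominator, cancel the common prefactors, and expand the resulting quotient of asymptotic series in powers of $n^{-1/2}$ to the required order.

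First I would fix $z\in\CC\setminus[0,\infty)$ and apply \eqref{PerronsForm} with parameter $\alpha+\beta$ and index $n+j$ in the numerator, and with parameter $\alpha$ and index $n+k$ in the denominator. Using \eqref{Lead-Coeff} to pass between the monic normalization and the classically normalized $L^{(\cdot)}_m$ if needed, the exponential factors $e^{z/2}$ and the cut-power factors $(-z)^{-\alpha/2-1/4}$ are identical in numerator and denominator (in the case $\beta=0$) and simply cancel. What does not cancel is: the power of the index, $(n+j)^{(\alpha+\beta)/2-1/4}$ versus $(n+k)^{\alpha/2-1/4}$; the exponential of the square root, $e^{2(-(n+j)z)^{1/2}}$ versus $e^{2(-(n+k)z)^{1/2}}$; the residual factor $(-z)^{-\beta/2}$ when $\beta\neq 0$; and the bracketed correction series $\{\sum_k C_k(z) m^{-k/2}+\mathcal{O}(m^{-p/2})\}$ evaluated at $m=n+j$ and $m=n+k$. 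The key elementary estimates are the Taylor expansions
\begin{align*}
(n+j)^{s}&=n^{s}\Bigl(1+\tfrac{sj}{n}+\mathcal{O}(n^{-2})\Bigr),\\
(-(n+j)z)^{1/2}&=(-nz)^{1/2}\Bigl(1+\tfrac{j}{2n}-\tfrac{j^2}{8n^2}+\cdots\Bigr)=(-nz)^{1/2}+\tfrac{j\sqrt{-z}}{2\sqrt{n}}+\mathcal{O}(n^{-3/2}),
\end{align*}
so that the ratio of exponentials becomes $\exp\!\bigl(2[(-(n+j)z)^{1/2}-(-(n+k)z)^{1/2}]\bigr)=\exp\!\bigl((j-k)\sqrt{-z}/\sqrt{n}+\mathcal{O}(n^{-3/2})\bigr)$, which I would then expand as $1+(j-k)\sqrt{-z}/\sqrt{n}+\tfrac12(j-k)^2(-z)/n+\mathcal{O}(n^{-3/2})$.

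Next I would multiply together the three non-trivial factors in the case $\beta=0$: the ratio of exponentials just computed, the ratio of index powers $(n+j)^{\alpha/2-1/4}/(n+k)^{\alpha/2-1/4}=1+(\alpha/2-1/4)(j-k)/n+\mathcal{O}(n^{-2})$, and the ratio of bracketed series. For the last factor I would use $C_0\equiv 1$ together with $1/(1+a_1 n^{-1/2}+\cdots)=1-a_1 n^{-1/2}+\cdots$, where $a_1=C_1(z)$; crucially the leading $C_1(z)n^{-1/2}$ terms in numerator and denominator are identical (same $z$), so they cancel and the bracket ratio contributes only $1+\mathcal{O}(n^{-1})$ — and in fact one must check it contributes nothing at order $n^{-1}$ either, or absorb whatever it contributes, but since the claimed expansion is only asserted up to $\mathcal{O}_z(n^{-3/2})$ and the $n^{-1}$ coefficient in the statement is exactly $(\alpha/2-1/4-z(j-k)/2)(j-k)$, I would need to verify that the $C_1,C_2$ contributions to order $n^{-1}$ vanish. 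This is the one genuinely delicate bookkeeping point: the $n^{-1}$ term receives contributions from the square-root exponential at second order ($\tfrac12(j-k)^2(-z)/n$, giving the $-z(j-k)^2/2$ piece), from the index-power ratio (giving $(\alpha/2-1/4)(j-k)/n$), and a priori from the bracket series; collecting these should reproduce exactly $(j-k)\bigl(\alpha/2-1/4-z(j-k)/2\bigr)/n$, which forces the bracket-series contribution at this order to be zero — a fact I expect follows because the $C_k$ depend on $z$ but not on $j,k$, so the ratio $[\cdots]_{n+j}/[\cdots]_{n+k}$ differs from $1$ only at order $n^{-1}$ times $j-k$ through the derivative of $m^{-1/2}$, i.e. it is $\mathcal{O}(n^{-3/2})$ once the leading terms match.

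For the case $\beta\neq 0$ the argument is shorter: the cut-power prefactor contributes the extra honest factor $(-z)^{-\beta/2}$, the index-power ratio is $(n+j)^{(\alpha+\beta)/2-1/4}/(n+k)^{\alpha/2-1/4}=n^{\beta/2}(1+\mathcal{O}(n^{-1}))$, and the exponential ratio and bracket ratio are both $1+\mathcal{O}(n^{-1/2})$, so multiplying gives $(\sqrt{n}/\sqrt{-z})^{\beta}(1+\mathcal{O}_z(n^{-1/2}))$ as claimed; here one only needs $j,k\geq -n$ so that the indices $n+j,n+k$ are nonnegative and Perron's formula applies. Throughout, the uniformity in $z$ on compact subsets of $\CC\setminus[0,\infty)$ is inherited directly from the uniformity clause in \eqref{PerronsForm} — the remainders $\mathcal{O}_z(n^{-p/2})$ there are uniform on closed sets disjoint from $[0,\infty)$, and all the Taylor expansions above have coefficients that are continuous (hence locally bounded) functions of $z$ on that region, with $(-z)^{1/2}$ and $(-z)^{-\beta/2}$ single-valued and analytic there. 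The main obstacle, as noted, is the careful tracking of the $n^{-1}$-order term in the $\beta=0$ case to confirm that the $C_k(z)$ corrections do not contaminate the stated coefficient; once that cancellation is pinned down, the rest is routine expansion and collection of terms.
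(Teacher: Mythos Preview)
Your proposal is correct. The paper itself does not prove this lemma: it is stated with a citation to \cite{DHM12} (Lemma~1) and \cite{MZFH12} (Proposition~6), and no argument is given in the text. Your approach --- substituting Perron's formula \eqref{PerronsForm} into numerator and denominator, cancelling the common $e^{z/2}$ and $(-z)^{-\alpha/2-1/4}$ prefactors, and expanding the remaining three factors (index-power ratio, square-root exponential ratio, bracket-series ratio) in powers of $n^{-1/2}$ --- is the natural one and is essentially what the cited references do.

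Your handling of the one delicate point is right: in the $\beta=0$ case the $C_k(z)$ are the \emph{same} in numerator and denominator (same parameter $\alpha$, same point $z$), so the bracket series are $1+C_1(z)(n+j)^{-1/2}+C_2(z)(n+j)^{-1}+\mathcal{O}(n^{-3/2})$ versus the same with $j$ replaced by $k$; since $(n+j)^{-1/2}-(n+k)^{-1/2}=\mathcal{O}(n^{-3/2})$ and $(n+j)^{-1}-(n+k)^{-1}=\mathcal{O}(n^{-2})$, the bracket ratio is indeed $1+\mathcal{O}_z(n^{-3/2})$ and does not contaminate the $n^{-1}$ coefficient. The rest --- that the exponential ratio contributes $(j-k)\sqrt{-z}/\sqrt n$ at first order and $-z(j-k)^2/(2n)$ at second, while the index-power ratio contributes $(\alpha/2-1/4)(j-k)/n$ --- is exactly the bookkeeping you describe, and it assembles into the stated coefficient. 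For $\beta\neq 0$ the $C_k$ differ between numerator and denominator, so the bracket ratio is only $1+\mathcal{O}_z(n^{-1/2})$, which is all that is claimed. The uniformity on compacta follows, as you note, directly from the uniformity clause in Perron's formula together with local boundedness of the Taylor coefficients.
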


To study the outer relative asymptotic between the standard Laguerre polynomials and the Laguerre--Sobolev orthogonal polynomials (see Formula \eqref{compasymt}), we need to compute the behavior of the Laguerre kernel polynomials and their derivatives when $n$ approaches infinity. To~this end, we prove  the following auxiliary result, which is an extension of {(\cite{Ahl79}, Ch. 5, Th. 16)}.

\begin{lemma}\label{OgrandeDer}
Let $G$ and $G'$ be two open subsets of the complex plane and $f_n:G\times G'\longrightarrow \CC$ be  a sequence of functions that are analytic with respect to each variable separately. If~$\{f_n\}_{n=1}^{\infty}$ is a uniformly bounded sequence on each set in the form $K\times K'$, where $K\subset G$ and $K' \subset G'$ are compact sets, then any of its partial derivative sequences are also uniformly bounded on each set in the form $K\times K'$.
\end{lemma}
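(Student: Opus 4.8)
\textbf{Proof plan for Lemma \ref{OgrandeDer}.}

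The plan is to reduce the two-variable statement to the classical one-variable Cauchy-estimate argument (the cited Ahlfors result) by handling the two variables one at a time. First I would fix compact sets $K\subset G$ and $K'\subset G'$ and choose $r>0$ small enough that the ``fattened'' sets $\widetilde K=\{z: d(z,K)\le r\}$ and $\widetilde K'=\{w: d(w,K')\le r\}$ are still compact and contained in $G$ and $G'$ respectively; this is possible because $G,G'$ are open and $K,K'$ are compact, so $d(K,\partial G)>0$ and $d(K',\partial G')>0$. By hypothesis there is a constant $M=M(\widetilde K,\widetilde K')$ with $|f_n(z,w)|\le M$ for all $n$ and all $(z,w)\in\widetilde K\times\widetilde K'$.

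Next, for the partial derivative in the first variable: fix any $w\in K'$. The function $z\mapsto f_n(z,w)$ is analytic on a neighborhood of $\widetilde K$ and bounded there by $M$. For $z_0\in K$ the closed disk $\overline{B(z_0,r)}$ lies in $\widetilde K$, so Cauchy's integral formula for the derivative gives
\begin{align*}
\left|\frac{\partial f_n}{\partial z}(z_0,w)\right|
=\left|\frac{1}{2\pi i}\oint_{|z-z_0|=r}\frac{f_n(z,w)}{(z-z_0)^2}\,dz\right|
\le \frac{M}{r}.
\end{align*}
Since $z_0\in K$ and $w\in K'$ were arbitrary, $\{\partial f_n/\partial z\}$ is uniformly bounded by $M/r$ on $K\times K'$; moreover one checks (again by differentiating under the Cauchy integral, or by Morera/Fubini) that $\partial f_n/\partial z$ is still analytic in each variable separately, so the hypothesis of the lemma is satisfied by this new sequence. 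The same argument with the roles of the variables exchanged handles $\partial f_n/\partial w$. Iterating, any mixed partial $\partial^{j+k} f_n/\partial z^j\partial w^k$ is obtained by applying this one-variable step $j+k$ times (shrinking the fattening radius by a fixed fraction at each stage, e.g. using $r/(j+k)$ from the start), and each step preserves both the uniform boundedness on the relevant fattened product set and the separate analyticity, so the final sequence is uniformly bounded on $K\times K'$.

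The only genuinely delicate point is the bookkeeping that each differentiation step outputs a sequence still satisfying the hypotheses of the lemma — i.e. that $\partial f_n/\partial z$ remains analytic in $w$ for fixed $z$ and analytic in $z$ for fixed $w$ — so that the induction closes; this is routine (differentiation under the Cauchy kernel, or Weierstrass's theorem applied to the uniform-on-compacts convergence of difference quotients), but it is the step that must be stated carefully rather than waved through. Everything else is a direct application of the Cauchy estimate, exactly as in the one-variable case, with the product structure $K\times K'$ causing no essential difficulty because each variable is treated in isolation.
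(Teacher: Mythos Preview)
Your proposal is correct and follows essentially the same approach as the paper: fatten the compact set by a fixed radius $r$ (the paper only fattens $K$, you fatten both, but this is immaterial), invoke the uniform bound on the fattened product, and apply the Cauchy estimate $M/r$ via the integral formula for the derivative, then induct. The paper's proof is in fact terser than yours---it simply says ``it is sufficient to prove this for the first derivative order with respect to any of the variables and then proceed by induction'' without spelling out, as you do, the verification that $\partial f_n/\partial z$ remains separately analytic so that the induction closes.
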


\begin{proof}
Note that it is sufficient  to  prove this for the first derivative order with respect to any of the variables and then proceed by induction.  Let $K\subset G$ and $K' \subset G'$ be  two compact sets. Denote $G^c=\CC \setminus G$, $\dsty d(K,G^c)= \inf_{{z\in K}, {w \in G^c}}|z-w|$,    $r=d(K,G^c)/2>0$ and $B(z,r)=\{\zeta \in \CC: |z-\zeta|<r\}$.  Take $K^*$ as the closure of $\bigcup_{z\in K}B(z,r)$; thus, $K^*$ is a compact subset of $G$. Thus, there exists a positive constant $M>0$ such that $|f_n(z,w)|\leq M$ for all $z\in K^*$, $w\in K'$ and $n\in\NN$. Hence, for all $z\in K$, $w\in K'$ and  $n\in\NN$, we obtain
\begin{align*}
\left|\frac{\partial f_n}{\partial z}(z,w)\right|&=\left|\frac{1}{2\pi i}\int_{c(z,r)}\frac{f_n(\xi, w)}{(\xi-z)^2}d\xi\right|\leq \frac{V(c(z,r))}{2\pi}\max_{\xi\in c(z,r)}\left\{\frac{\left|f_n(\xi,w)\right|}{|\xi-z|^2}\right\}\\
&=\frac{2\pi r}{2\pi r^2}\max_{\xi\in c(z,r)}\left\{\left|f_n(\xi,w)\right|\right\}\leq \frac{M}{r},
\end{align*}
where $c(z,r)$ denotes the circle with center at $z$, radius $r$  and length $V(c(z,r))$. \end{proof}

From the Fourier expansion of $S_{n}$ in terms of the basis $\left\{ L^{\alpha}_n\right\} _{n\geqslant 0}$ we obtain
\begin{align}\nonumber
S_{n}(x)&=\sum_{i=0}^{n} \Ip{S_n}{L^{\alpha}_i}_{\mu } \frac{L^{\alpha}_{i}(x)}{\left\Vert L^{\alpha}_{i}\right\Vert _{\mu }^{2}}
=L^{\alpha}_n(x)+\sum_{i=0}^{n-1}\Ip{S_n}{L^{\alpha}_{i}}_{\mu }\frac{L^{\alpha}_{i}(x)}{\left\Vert L^{\alpha}_{i}\right\Vert _{\mu }^{2}}\\ \nonumber
&=L^{\alpha}_n(x)+\sum_{i=0}^{n-1}\left(\IpS{S_n}{L^{\alpha}_{i}}-\sum_{(j,k)\in I_+}\lambda_{j,k} S_n^{(k)}(c_j)\left(L^{\alpha}_{i}\right)^{(k)}(c_j)\right)\frac{L^{\alpha}_{i}(x)}{\left\Vert L^{\alpha}_{i}\right\Vert _{\mu }^{2}}
\\ \nonumber
&=L^{\alpha}_n(x)-\sum_{(j,k)\in I_+}\lambda_{j,k} S_n^{(k)}(c_j)\sum_{i=0}^{n-1}\frac{L^{\alpha}_{i}(x)\left(L^{\alpha}_{i}\right)^{(k)}(c_j)}{\left\Vert L^{\alpha}_{i}\right\Vert _{\mu }^{2}}
\\ \label{FConexFINAL}
&=L^{\alpha}_{n} (x)-\sum_{(j,k)\in I_+}\lambda_{j,k} S_{n}^{(k)}(c_{j})K_{n-1}^{(0,k)}(x,c_j),
\end{align}
where we use the notation $\dsty K_{n}^{(j,k)}\left( x,y\right) =\frac{\partial ^{j+k}K_{n}\left( x,y\right)
}{\partial ^{j}x\partial ^{k}y}$  to denote the partial derivatives of the kernel polynomials defined in \eqref{Kernel-n}.  Differentiating  Equation  \eqref{FConexFINAL} $\ell$-times and evaluating then at $x=c_{i}$ for each ordered pair $(i,\ell)\in I_+$, we obtain the following system of $d^*$ linear equations and $d^*$ unknowns $S^{(k)}_n(c_{j})$.
\begin{align}%\nonumber
\left(L^{\alpha}_{n}\right)^{(\ell)}(c_{i})=  \left(1+\lambda_{i,\ell}K_{n-1}^{(\ell,\ell)}(c_i,c_i)\right)S^{(\ell)}_n(c_{i})   + \sum_{\substack{(j,k)\in I_+\\ (j,k)\neq (i,\ell)}}^{d_j}\!\!\!\!\!\lambda_{j,k} K_{n-1}^{(\ell,k)}(c_{i},c_j)S^{(k)}_n(c_{j}).\label{eqsystem1}%, \quad (i,\ell)\in I_+
\end{align}

\begin{lemma}%\label{Lem-KernelAsymp}
The Laguerre kernel polynomials and their derivatives satisfy the following behavior when $n$ approaches infinity for $x,y\in \CC\setminus [0,\infty)$
\begin{align*}
K_{n-1}^{(i,j)}\left( x,y\right) =\frac{\partial^{i+j}K_{n-1}}{\partial^ix\partial^jy}(x,y)&=\frac{ L^{(\alpha+i)}_{n}(x)L^{(\alpha+j)}_{n}(y)}{n^{\alpha-\frac{1}{2}}(\sqrt{-x}+\sqrt{-y})}\left((-1)^{i+j}+\mathcal{O}_{x,y}(n^{-1/2})\right),\quad i,j\geq 0,
\end{align*}
where $\mathcal{O}_{x,y}(n^{-k})$ denotes some sequence of functions $\{g_n(x,y)\}_{n=1}^{\infty}$ that are holomorphic with respect to each variable and whose sequence  $\{n^{k}g_{n}\}$ is uniformly bounded on every set $K\times K'$,  such that $K$ and $K'$ are compact subsets of $\CC\setminus\RRp$.
\end{lemma}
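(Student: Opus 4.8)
The plan is to start from the Christoffel--Darboux formula \eqref{CD-Ident} for the monic Laguerre kernel $K_{n-1}(x,y)$, substitute the explicit value $\|L^{\alpha}_k\|_{\mu}^2 = k!\,\Gamma(k+\alpha+1)$ from \eqref{NormLag}, and rewrite everything in terms of the classical (non-monic) Laguerre polynomials $L^{(\alpha)}_k$ using \eqref{Lead-Coeff}, so that the leading factor becomes an explicit ratio of Gamma functions times a bilinear expression in $L^{(\alpha)}_n(x)$, $L^{(\alpha)}_n(y)$, $L^{(\alpha)}_{n-1}(x)$, $L^{(\alpha)}_{n-1}(y)$ over $(x-y)$. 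Then I would apply the Perron asymptotic formula \eqref{PerronsForm}, or more conveniently the ratio asymptotics of Lemma \ref{Lem-LagRatioAsymp}, to evaluate the ratios $L^{(\alpha)}_{n-1}(x)/L^{(\alpha)}_n(x)$ and the analogous ones in $y$; the point is that $L^{(\alpha)}_{n-1}(x)/L^{(\alpha)}_n(x) = 1 + \sqrt{-x}/\sqrt{n} + \mathcal{O}_x(n^{-1})$, so that the numerator $L^{(\alpha)}_{n}(x)L^{(\alpha)}_{n-1}(y) - L^{(\alpha)}_{n}(y)L^{(\alpha)}_{n-1}(x)$, after factoring out $L^{(\alpha)}_n(x)L^{(\alpha)}_n(y)$, becomes $\big(\sqrt{-y}-\sqrt{-x}\big)/\sqrt{n} + \mathcal{O}(n^{-1})$, which combines with the $(x-y) = (\sqrt{-x}-\sqrt{-y})(\sqrt{-x}+\sqrt{-y})\cdot(-1)$ in the denominator to leave the factor $1/(\sqrt{-x}+\sqrt{-y})$ claimed in the statement, up to the overall power of $n$ coming from the Gamma ratio. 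This establishes the case $i=j=0$.

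For general $i,j\geq 0$ I would use the Hahn condition \eqref{Derivada de Ln}, which gives $[L^{(\alpha)}_k]' = -L^{(\alpha+1)}_{k-1}$ and hence $\partial^i_x\partial^j_y K_{n-1}(x,y)$ is, up to index shifts, the kernel built from $\{L^{(\alpha+\max(i,j))}_k\}$ — more precisely, differentiating the Fourier-type expression $K_{n-1}(x,y)=\sum_{k=0}^{n-1} L^{\alpha}_k(x)L^{\alpha}_k(y)/\|L^{\alpha}_k\|^2_{\mu}$ termwise and applying \eqref{Derivada de Ln} repeatedly converts each factor $(L^{\alpha}_k)^{(i)}(x)$ into a monic Laguerre polynomial with shifted parameter. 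Carrying out the same Christoffel--Darboux manipulation on this shifted kernel, and then invoking the $\beta\neq 0$ branch of Lemma \ref{Lem-LagRatioAsymp} to pass from parameter $\alpha + \max(i,j)$ down to the parameters $\alpha+i$ in $x$ and $\alpha+j$ in $y$ appearing in the statement, produces the $(-1)^{i+j}$ sign (one $-1$ for each differentiation, from \eqref{Derivada de Ln}) and the stated $n^{\alpha - 1/2}$ normalization. Throughout, the $\mathcal{O}_{x,y}(n^{-1/2})$ bookkeeping is handled by noting that products, sums, and ratios (with denominators bounded away from zero on compacts of $\CC\setminus\RR_+$) of the $\mathcal{O}_z$-type sequences from Lemma \ref{Lem-LagRatioAsymp} stay in the same class, and that the resulting error functions are holomorphic in each variable; that $\{n^k g_n\}$ is then uniformly bounded on sets $K\times K'$ follows from Lemma \ref{OgrandeDer} once one has the bound for the undifferentiated kernel.

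The main obstacle I anticipate is not the asymptotics of any single Laguerre ratio — those are packaged in Lemma \ref{Lem-LagRatioAsymp} — but rather the bookkeeping needed to see that the two "leading" terms in the differentiated Christoffel--Darboux numerator cancel to the right order: one must expand $L^{(\alpha+\cdot)}_{n-1}/L^{(\alpha+\cdot)}_n$ to \emph{two} terms (through order $n^{-1}$, not just $n^{-1/2}$) because the order-$n^{-1/2}$ main terms are symmetric in $x\leftrightarrow y$ and cancel in the antisymmetric combination, so the genuine leading behavior of the numerator sits at order $n^{-1/2}$ only after that cancellation — equivalently, the surviving term must be tracked carefully so that the factor $(\sqrt{-x}-\sqrt{-y})$ in it is correctly matched against the same factor hidden in $x-y$. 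A secondary technical point is justifying that all the error terms remain holomorphic in each variable separately and uniformly controlled on bidisks, for which Lemma \ref{OgrandeDer} is exactly the tool; I would state the $i=j=0$ estimate first with its error term holomorphic and uniformly bounded, and then obtain the estimates for the derivatives either by differentiating that estimate via Lemma \ref{OgrandeDer} or, equivalently and more explicitly, by redoing the computation on the shifted-parameter kernel as described above, whichever keeps the constants most transparent.
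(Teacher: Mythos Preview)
Your base case ($i=j=0$) matches the paper's argument for $x\neq y$: factor the Christoffel--Darboux numerator, apply Lemma~\ref{Lem-LagRatioAsymp}, and absorb the norm via the Gamma ratio. Two remarks. First, the paper also treats the confluent case $x=y$ separately via the second line of \eqref{CD-Ident}, which you omit; this matters because dividing the $\mathcal{O}_{x,y}(n^{-1})$ remainder by $x-y$ is not obviously uniformly bounded near the diagonal. Second, your description of the cancellation is reversed: in $L^{(\alpha)}_n(y)/L^{(\alpha)}_{n-1}(y)-L^{(\alpha)}_n(x)/L^{(\alpha)}_{n-1}(x)$ it is the constant terms that cancel, while the $n^{-1/2}$ terms survive to give $(\sqrt{-y}-\sqrt{-x})/\sqrt{n}$; two terms of the expansion, with remainder $\mathcal{O}(n^{-1})$, are exactly what is used.

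The real gap is your inductive step. Differentiating the sum $K_{n-1}(x,y)=\sum_k L^{\alpha}_k(x)L^{\alpha}_k(y)/\|L^{\alpha}_k\|_{\mu}^2$ termwise via \eqref{Derivada de Ln} does \emph{not} yield ``the kernel built from $\{L^{(\alpha+\max(i,j))}_k\}$'': the numerators shift to parameters $\alpha+i$ in $x$ and $\alpha+j$ in $y$ (different when $i\neq j$), while the denominators $\|L^{\alpha}_k\|_{\mu}^2$ remain at parameter $\alpha$, so the resulting sum is not a reproducing kernel for any single weight and \eqref{CD-Ident} does not apply to it. There is no ``same Christoffel--Darboux manipulation'' to redo on it. The paper instead inducts on $i+j$ by differentiating the asymptotic formula itself: assuming the result for $(i,j)$, take $\partial/\partial y$ of both sides, apply \eqref{Derivada de Ln} to the factor $L^{(\alpha+j)}_n(y)$ to produce $-L^{(\alpha+j+1)}_{n-1}(y)$, invoke Lemma~\ref{OgrandeDer} to guarantee $\partial_y\,\mathcal{O}_{x,y}(n^{-1/2})=\mathcal{O}_{x,y}(n^{-1/2})$, and then use \eqref{Lem-LagRatioAsymp-0} to convert $L^{(\alpha+j+1)}_{n-1}$ to $L^{(\alpha+j+1)}_n$. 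You mention this route at the end as an ``equivalent'' alternative, but it is not equivalent to the shifted-kernel idea --- it is the argument that actually works, and it should be your primary line.
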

\begin{proof}
 The proof is by induction on $k=i+j$. First, suppose $k=0$ (i.e., $i=j=0$) and split the proof into two cases according to whether $x=y$ or not. If~$x=y$, from~ \eqref{CD-Ident}, \eqref{Lead-Coeff}, \eqref{Derivada de Ln} and  \eqref{Lem-LagRatioAsymp-0},  we obtain
\allowdisplaybreaks
\begin{align*}\nonumber
\frac{\|L^{(\alpha)}_{n-1}\|_\mu^2}{n} K_{n-1}(x,x)=&L^{(\alpha)}_{n}(x)(L^{(\alpha)}_{n-1})'(x)-(L^{(\alpha)}_{n})'(x)L^{(\alpha)}_{n-1}(x) \\ = &
L^{(\alpha+1)}_{n-1}(x)L^{(\alpha)}_{n-1}(x)-L^{(\alpha+1)}_{n-2}(x)L^{(\alpha)}_{n}(x)\\
=&L^{(\alpha+1)}_{n-2}(x)L^{(\alpha)}_{n-1}(x)\left(\frac{L^{(\alpha+1)}_{n-1}(x)}{L^{(\alpha+1)}_{n-2}(x)}-\frac{L^{(\alpha)}_{n}(x)}{L^{(\alpha)}_{n-1}(x)}\right)\\
=&L^{(\alpha+1)}_{n-2}(x)L^{(\alpha)}_{n-1}(x)\left[1+\frac{\sqrt{-x}}{\sqrt{n}}+\left[\frac{\alpha+1}{2}-\frac{1}{4}-\frac{x}{2}\right]\frac{1}{n}
+\mathcal{O}_x(n^{-3/2})\right.\\
&-\left.\left(1+\frac{\sqrt{-x}}{\sqrt{n}}+\left[\frac{\alpha}{2}-\frac{1}{4}-\frac{x}{2}\right]\frac{1}{n}+\mathcal{O}_x(n^{-3/2})\right)\right]\\
=&L^{(\alpha+1)}_{n-2}(x)L^{(\alpha)}_{n-1}(x)\left(\frac{1}{2n}+\mathcal{O}_x(n^{-3/2})\right)\\
=&\frac{L^{(\alpha)}_{n}(x)L^{(\alpha)}_{n}(x)}{2n}\left(\frac{\sqrt{n}}{\sqrt{-x}} \right)\left(1+\mathcal{O}_x(n^{-1/2})\right)  \\
=&\frac{L^{(\alpha)}_{n}(x)L^{(\alpha)}_{n}(x)}{2\sqrt{n}\sqrt{-x}}\left(1+\mathcal{O}_{x}(n^{-1/2})\right).
\end{align*}

On the other hand, if~$x\neq y$, from~ \eqref{CD-Ident} and  \eqref{Lem-LagRatioAsymp-0}  we obtain
\begin{align*}
\frac{\|L^{(\alpha)}_{n-1}\|_\mu^2}{n}K_{n-1}(x,y)& =\frac{L^{(\alpha)}_{n-1}(x)L^{(\alpha)}_{n}(y)-L^{(\alpha)}_{n}(x)L^{(\alpha)}_{n-1}(y)}{x-y}
\\  & =\frac{L^{(\alpha)}_{n-1}(x)L^{(\alpha)}_{n-1}(y)}{x-y}\left(\frac{L^{(\alpha)}_{n}(y)}{L^{(\alpha)}_{n-1}(y)}-\frac{L^{(\alpha)}_{n}(x)}{L_{n-1}^{(\alpha)}(x)}\right)\\
&=\frac{L^{(\alpha)}_{n-1}(x)L^{(\alpha)}_{n-1}(y)}{x-y}\left(\frac{\sqrt{-y}-\sqrt{-x}}{\sqrt{n}}+\mathcal{O}_{x,y}(n^{-1})\right)\\
&=\frac{L^{(\alpha)}_{n-1}(x)L^{(\alpha)}_{n-1}(y)}{\sqrt{-x}+\sqrt{-y}}\left(\frac{1}{\sqrt{n}}+\mathcal{O}_{x,y}(n^{-1})\right)\\
&=\frac{L^{(\alpha)}_{n}(x)L^{(\alpha)}_{n}(y)}{\sqrt{n}(\sqrt{-x}+\sqrt{-y})}\left(1+\mathcal{O}_{x,y}(n^{-1/2})\right).
\end{align*}

 {From}  \eqref{NormLag} and (\cite{Rus05}, Appendix, (1.14))
$$\|L^{(\alpha)}_{n-1}\|_\mu^2=\frac{\Gamma(n+\alpha)}{\Gamma(n)}=n^{\alpha}(1+\mathcal{O}(n^{-1})),$$ which proves the case $k=0$. Now, we assume that the theorem is true for $i+j=k$ and we will prove it for $i+j=k+1$. By~the symmetry of the formula, the~proof is analogous when any of the variables increase its derivative order; thus, we only will prove it when the variable $y$ does.

\begin{align*}
\frac{\partial^{k+1}K_{n-1}}{\partial x^i\partial^{j+1}y}(x,y)=&\frac{\partial }{\partial y}\left(\frac{L^{(\alpha+i)}_{n}(x)L^{(\alpha+j)}_{n}(y)}{n^{\alpha-\frac{1}{2}}(\sqrt{-x}+\sqrt{-y})}\left((-1)^{k}+\mathcal{O}_{x,y}(n^{-1/2})\right)\right)\\
=&\frac{L^{(\alpha+i)}_{n}(x)}{n^{\alpha-\frac{1}{2}}}\left[\frac{\partial }{\partial y}\left(\frac{L^{(\alpha+j)}_{n}(y)}{\sqrt{-x}+\sqrt{-y}}\right)\left((-1)^{k}+\mathcal{O}_{x,y}(n^{-1/2})\right)\right.\\
&\left.+\frac{L^{(\alpha+j)}_{n}(y)}{\sqrt{-x}+\sqrt{-y}}\frac{\partial }{\partial y}\left((-1)^{k}+\mathcal{O}_{x,y}(n^{-1/2})\right)\right]\\
=&\frac{L^{(\alpha+i)}_{n}(x)}{n^{\alpha-\frac{1}{2}}}\left[\frac{-(\sqrt{-x}+\sqrt{-y})L^{(\alpha+j+1)}_{n-1}(y)+\frac{1}{2} L^{(\alpha+j)}_{n}(y)(-y)^{-1/2}}{(\sqrt{-x}+\sqrt{-y})^2} \right.\\
&\left.\cdot\left((-1)^{k}+\mathcal{O}_{x,y}(n^{-1/2})\right)+\frac{L^{(\alpha+j)}_{n}(y)}{\sqrt{-x}+\sqrt{-y}}\mathcal{O}_{x,y}(n^{-1/2})\right]\\
=&\frac{L^{(\alpha+i)}_{n}(x)L^{(\alpha+j+1)}_{n-1}(y)}{n^{\alpha-\frac{1}{2}}(\sqrt{-x}+\sqrt{-y})}\left[\left(-1+\frac{\frac{\sqrt{-y}}{\sqrt{n}}+\mathcal{O}_{x,y}(n^{-1})}{2\sqrt{-y}(\sqrt{-x}+\sqrt{-y})}\right)\right.\\
&\left.\cdot\left((-1)^{k}+\mathcal{O}_{x,y}(n^{-1/2})\right)+\left(\frac{\sqrt{-y}}{\sqrt{n}}+\mathcal{O}_{x,y}(n^{-1})\right)\mathcal{O}_{x,y}(n^{-1})\right]\\
=&\frac{L^{(\alpha+i)}_{n}(x)L^{(\alpha+j+1)}_{n-1}(y)}{n^{\alpha-\frac{1}{2}}(\sqrt{-x}+\sqrt{-y})}\left[\left(-1+\mathcal{O}_{x,y}(n^{-1/2})\right)\left((-1)^{k}+\mathcal{O}_{x,y}(n^{-1/2})\right)\right.\\
&\left.+\mathcal{O}_{x,y}(n^{-3/2})\right]\\
=&\frac{L^{(\alpha+i)}_{n}(x)L^{(\alpha+j+1)}_{n}(y)}{n^{\alpha-\frac{1}{2}}(\sqrt{-x}+\sqrt{-y})}\left[(-1)^{k+1}+\mathcal{O}_{x,y}(n^{-1/2})\right],
\end{align*}
where in the third equality we use Lemma \ref{OgrandeDer} to guarantee that $\frac{\partial}{\partial y}\mathcal{O}_{x,y}(n^{-1})=\mathcal{O}_{x,y}(n^{-1})$, and in the fourth equality, we use \eqref{Lem-LagRatioAsymp-0}.
\end{proof}

\section{Proof of Theorem \ref{asymp} and~Consequences}\label{SecOuterCompAsym}
\begin{proof}[Proof of Theorem \ref{asymp}]\

Without loss of generality, we will consider the polynomials $L_n^{(\alpha)}=(-1)^n/n!\, L_n^{\alpha}$ and $\widehat{S}_n=(-1)^n/n! \, S_n$, instead of the monic polynomials $L_n^{\alpha}$ and $S_n$.

Multiplying both sides of \eqref{FConexFINAL} by $(-1)^n/n!$, we obtain
\begin{align}\label{eqkern}
\widehat{S}_{n}(x)=L^{(\alpha)}_{n}(x)-\sum_{j=1}^{N}\lambda_{j}\widehat{S}_{n}^{(d_{j})}(c_{j})K_{n-1}^{(0,d_{j})}(x,c_{j}),
\end{align}

 {Dividing}  by $L^{(\alpha)}_n(x)$ on both sides of  \eqref{eqkern}, we obtain
\begin{align}\label{precompasymt}
\frac{\widehat{S}_n(x)}{L_n^{(\alpha)}(x)}=1-\sum_{j=1}^{N} \lambda_{j} \widehat{S}_n^{(d_j)}(c_{j})\frac{K_{n-1}^{(0,d_{j})}(x,c_{j})}{L_n^{(\alpha)}(x)}
.\end{align}

 {Recall}  that we are considering the Laguerre--Sobolev polynomials $\{\widehat{S}_n\}$ that are orthogonal with respect to  \eqref{LaguerreSIP}. In~this case, the~consistent linear system  \eqref{eqsystem1} becomes
\begin{align}\label{eqsystem2}
\begin{split}
\left(L^{(\alpha)}_{n}\right)^{(d_k)}\!\!(c_{k})\!&=\!\left(\!1\!+\!\lambda_{k}K_{n-1}^{(d_k,d_k)}(c_{k},c_{k})\right)\!\widehat{S}^{(d_k)}_n(c_{k})+\!\sum_{\substack{j=1\\ j\neq k}}^{N}\lambda_{j}K_{n-1}^{(d_k,d_j)}(c_{k},c_{j})\widehat{S}^{(d_j)}_n(c_{j}),
\end{split}
\end{align}
for $k=1,2,\dots,N$. Let us define
\begin{align*}
P^{\alpha}_{n,j}(x):= - \lambda_{j} \widehat{S}_n^{(d_j)}(c_{j})\frac{K_{n-1}^{(0,d_{j})}(x,c_{j})}{L_n^{(\alpha)}(x)}\quad
\text{and} \quad
P^{\alpha}_{j}(x):= \; \lim_{n\to\infty}P^{\alpha}_{n,j}(x).
\end{align*}

 {From}  \eqref{precompasymt}, in~order to prove the existence of the limit  \eqref{compasymt}, we  need to figure out the values of $P^{\alpha}_{j}(x)$. Note that
\begin{align*}
\widehat{S}^{(d_j)}_n(c_{j})=-\frac{L_n^{(\alpha)}(x)P^{(\alpha)}_{n,j}(x)}{\lambda_{j}K_{n-1}^{(0,d_{j})}(x,c_{j})}.
\end{align*}

 {If} we replace these expressions in  \eqref{eqsystem2}, then we obtain the following linear system in the unknowns $P_{n,j}(x)$
\begin{align}\label{systn}
\left(
\begin{array}{cccc}
a_{1,1}(n,x) & a_{1,2}(n,x) &\cdots & a_{1,N}(n,x)\\
a_{2,1}(n,x) & a_{2,2}(n,x) &\cdots & a_{1,N}(n,x)\\
\vdots & \vdots &\ddots & \vdots\\
a_{N,1}(n,x) & a_{N,2}(n,x) &\cdots & a_{N,N}(n,x)
\end{array}
\right)\left(
\begin{array}{c}
P^{\alpha}_{n,1}(x) \\
P^{\alpha}_{n,2}(x)\\
\vdots\\
P^{\alpha}_{n,N}(x)\\
\end{array}
\right)=\left(
\begin{array}{c}
-1\\
-1\\
\vdots\\
-1
\end{array}
\right),
\end{align}
where
\begin{align*}
a_{k,j}(n,x)&=\begin{cases}\frac{L_n^{(\alpha)}(x)K_{n-1}^{(d_k,d_j)}(c_{k},c_{j})}{\left(L^{(\alpha)}_{n}\right)^{(d_k)}\!\!(c_{k})K_{n-1}^{(0,d_j)}\!(x,c_{j})},\quad & j \neq k,\\
\frac{L_n^{(\alpha)}(x)\left(\frac{1}{\lambda_{k}}+K_{n-1}^{(d_k,d_k)}\!(c_{k},c_{k})\right)}{\left(L^{(\alpha)}_{n}\right)^{(d_k)}\!\!(c_{k})K_{n-1}^{(0,d_k)}\!(x,c_{k})},\quad & j=k
.\end{cases}
\end{align*}

 {Now}, we will find the behavior of the coefficients $a_{k,j}(n,x)$ when $n$ approaches infinity. If~ $k=j$, we have
\begin{align*}
a_{k,k}(n,x)&=\frac{L_n^{(\alpha)}(x)\left(\frac{1}{\lambda_{k}}+K_{n-1}^{(d_k,d_k)}\!(c_k,c_k)\right)}{\left(L^{(\alpha)}_{n}\right)^{(d_k)}\!\!(c_{k})K_{n-1}^{(0,d_k)}\!(x,c_{k})}\\
&=\frac{L_n^{(\alpha)}(x)\left(\frac{1}{\lambda_{k}}+\frac{L^{(\alpha+d_k)}_{n}(c_{k})L^{(\alpha+d_k)}_{n}(c_{k})}{n^{\alpha-\frac{1}{2}}\sqrt{-c_{k}}+\sqrt{-c_{k}}}\left((-1)^{d_k+d_k}+\mathcal{O}(n^{-1/2})\right)\right)}{(-1)^{d_k}L^{(\alpha+d_k)}_{n-d_k}(c_{k})\frac{L^{(\alpha+d_k)}_{n}(c_{k})L^{(\alpha)}_{n}(x)}{n^{\alpha-\frac{1}{2}}(\sqrt{-x}+\sqrt{-c_{k}})}\left((-1)^{d_k}+\mathcal{O}_x(n^{-1/2})\right)}\\
&=\frac{\sqrt{-x}+\sqrt{-c_{k}}}{2\sqrt{-c_{k}}}\frac{\left(\frac{n^{\alpha-\frac{1}{2}}}{\lambda_{k}L^{(\alpha+d_k)}_{n}(c_{k})}+L^{(\alpha+d_k)}_{n}(c_{k})\left(1+\mathcal{O}(n^{-1/2})\right)\right)}{L^{(\alpha+d_k)}_{n-d_k}(c_{k})\left(1+\mathcal{O}_x(n^{-1/2})\right)}\\
&=\frac{\sqrt{-x}+\sqrt{-c_{k}}}{2\sqrt{-c_{k}}}\frac{\left(\frac{n^{\alpha-\frac{1}{2}}}{\lambda_{k}\left(L^{(\alpha+d_k)}_{n}(c_{k})\right)^2}+1+\mathcal{O}(n^{-1/2})\right)}{1+\mathcal{O}_x(n^{-1/2})}\\
&=\frac{\sqrt{-x}+\sqrt{-c_{k}}}{2\sqrt{-c_{k}}}\frac{1+\mathcal{O}(n^{-1/2})}{1+\mathcal{O}_x(n^{-1/2})},
\end{align*}
where in the last equality we use Perron's Asymptotic Formula  \eqref{PerronsForm} to obtain
\begin{align*}
\frac{n^{\alpha-\frac{1}{2}}}{(L^{(\alpha+d_k)}_n(c_{k}))^2}&=\frac{4\pi n^{\alpha-\frac{1}{2}}}{e^{c_{k}+4\sqrt{-c_{k}}\sqrt{n}}}\frac{(-c_{k})^{\alpha+d_k+\frac{1}{2}}}{n^{\alpha+d_k-\frac{1}{2}}}\mathcal{O}(1)
=\frac{1}{n^{d_k}e^{4\sqrt{-c_{k}}\sqrt{n}}}\mathcal{O}(1),
\end{align*}
which has exponential decay ($c_k<0$). On~the other hand, if~$k\neq j$, we obtain
\begin{align*}
a_{k,j}(n,x)&=\frac{L_n^{(\alpha)}(x)K_{n-1}^{(d_k,d_j)}(c_{k},c_{j})}{\left(L^{(\alpha)}_{n}\right)^{(d_k)}\!\!(c_{k})K_{n-1}^{(0,d_j)}(x,c_{j})} \\ & =\frac{\frac{L^{(\alpha+d_k)}_{n}(c_{k})}{\sqrt{-c_{k}}+\sqrt{-c_{j}}}\left((-1)^{d_k+d_j}+\mathcal{O}(n^{-1/2})\right)}{(-1)^{d_k}\frac{L^{(\alpha+d_k)}_{n-d_k}(c_{k})}{\sqrt{-x}+\sqrt{-c_{j}}}\left((-1)^{d_j}+\mathcal{O}(n^{-1/2})\right)}\\
&=\frac{\sqrt{-x}+\sqrt{-c_{j}}}{\sqrt{-c_{k}}+\sqrt{-c_{j}}}\frac{\left(1+\mathcal{O}(n^{-1
/2})\right)}{\left(1+\mathcal{O}(n^{-1/2})\right)}.
\end{align*}

 {Hence,} 
\begin{align*}
\lim_{n\to\infty}a_{k,j}(n,x)=\begin{cases}\frac{\sqrt{-x}+\sqrt{-c_{j}}}{\sqrt{-c_{k}}+\sqrt{-c_{j}}},\quad& \text{if } j\neq~k\\
\frac{\sqrt{-x}+\sqrt{-c_{k}}}{2\sqrt{-c_{k}}},\quad& \text{if }j=k
\end{cases}=\frac{\sqrt{-x}+\sqrt{|c_{j}|}}{\sqrt{|c_{k}|}+\sqrt{|c_{j}|}}.
\end{align*}

 {Next}, taking limits on both sides of  \eqref{systn} when $n$ approaches $\infty$, we obtain
\begin{align*}
\def\arraystretch{1.9}
\left(
\begin{array}{cccc}
\frac{\sqrt{-x}+\sqrt{|c_{1}|}}{\sqrt{|c_{1}|}+\sqrt{|c_{1}|}}&\frac{\sqrt{-x}+\sqrt{|c_{2}|}}{\sqrt{|c_{1}|}+\sqrt{|c_{2}|}} &\cdots &\frac{\sqrt{-x}+\sqrt{|c_{N}|}}{\sqrt{|c_{1}|}+\sqrt{|c_{N}|}}\\
\frac{\sqrt{-x}+\sqrt{|c_{1}|}}{\sqrt{|c_{2}|}+\sqrt{|c_{1}|}}&\frac{\sqrt{-x}+\sqrt{|c_{2}|}}{\sqrt{|c_{2}|}+\sqrt{|c_{2}|}} &\cdots &\frac{\sqrt{-x}+\sqrt{|c_{N}|}}{\sqrt{|c_{2}|}+\sqrt{|c_{N}|}}\\
 \vdots & \vdots & \ddots & \vdots\\
 \frac{\sqrt{-x}+\sqrt{|c_{1}|}}{\sqrt{|c_{N}|}+\sqrt{|c_1|}}&\frac{\sqrt{-x}+\sqrt{|c_{2}|}}{\sqrt{|c_{N}|}+\sqrt{|c_{2}|}}&\cdots& \frac{\sqrt{-x}+\sqrt{|c_{N}|}}{\sqrt{|c_{N}|}+\sqrt{|c_{N}|}}\\
\end{array}
\right)\left(
\begin{array}{c}
P^{\alpha}_{1}(x) \\
P^{\alpha}_{2}(x) \\
\vdots\\
P^{\alpha}_{N}(x)
\end{array}
\right)=\left(
\begin{array}{c}
-1\\
-1\\
\vdots\\
-1
\end{array}
\right).
\end{align*}

 {Using}  Cauchy determinants, it is not difficult to prove that the $N$ solutions of the above linear system are
\begin{align*}
P^{\alpha}_{j}(x)=\frac{-2\sqrt{|c_{j}|}}{\sqrt{-x}+\sqrt{|c_{j}|}} \prod_{\substack{l=1\\l\neq j}}^{N}\left(\frac{\sqrt{|c_{j}|}+\sqrt{|c_{l}|}}{\sqrt{|c_{j}|}-\sqrt{|c_{l}|}}\right). 
\end{align*}

 {Now}, from~ \eqref{precompasymt}, we obtain
\begin{align*}
\lim_{n\to\infty}\frac{\widehat{S}_n(x)}{L_n^{(\alpha)}(x)}=1+\sum_{j=1}^{N}\frac{2\sqrt{|c_{j}|}}{\sqrt{-x}+\sqrt{|c_{j}|}}  \prod_{\substack{l=1\\l\neq j}}^{N}\left(\frac{\sqrt{|c_{j}|}+\sqrt{|c_{l}|}}{\sqrt{|c_{j}|}-\sqrt{|c_{l}|}}\right).
\end{align*}

 {If}  we consider the change of variable $z=\sqrt{-x}$ and for simplicity we also consider the notation $t_{j}=\sqrt{|c_{j}|}$, then we obtain the following partial fraction decomposition
\begin{align*}
1+\sum_{j=1}^{N}\frac{2t_{j}}{z+t_{j}}  \prod_{\substack{l=1\\l\neq j}}^{N}\left(\frac{t_{j}+t_{l}}{t_{j}-t_{l}}\right).
\end{align*}

 {Thus,}  we only have to prove that this is the partial fraction decomposition of
\begin{align*}
\prod_{j=1}^{N}\left(\frac{z-t_{j}}{z+t_{j}}\right).
\end{align*}

 {Let}  $P_N(z)=\prod_{j=1}^{N}(z-t_{j})$ and $Q_N(z)=\prod_{j=1}^{N}(z+t_{j})$, then
\begin{align*}
\prod_{j=1}^{N}\left(\frac{z-t_{j}}{z+t_{j}}\right)=\frac{P_{N}(z)}{Q_{N}(z)}=1+\frac{P_{N}(z)-Q_{N}(z)}{Q_{N}(z)}=1+\sum_{j=1}^{N}\frac{A_{j}}{z+t_{j}},
\end{align*}
where
\begin{align*}
A_{j}&=\lim_{z\to-t_{j}}(z+t_{j})\frac{P_N(z)-Q_{N}(z)}{Q_{N}(z)}=\frac{P_{N}(-t_{j})-Q_{N}(-t_{j})}{Q'_{N}(-t_{j})}\\
&=\frac{\dsty \prod_{l=1}^{N}(-t_{j}-t_{l})-\prod_{l=1}^{N}(-t_{j}+t_{l})}{ \dsty \prod_{\substack{l=1\\l\neq j}}^{N}(-t_{j}+t_{l})}=\frac{(-1)^N2t_{j}}{(-1)^N}\prod_{\substack{l=1\\l\neq j}}^{N}\left(\frac{t_{j}+t_{l}}{t_{j}-t_{l}}\right)
=2t_{j}\prod_{\substack{l=1\\l\neq j}}^{N}\left(\frac{t_{j}+t_{l}}{t_{j}-t_{l}}\right),
\end{align*}
which completes the proof.
\end{proof}
%%%%%%%%%%%%%%%%%%%%%%%%%%%%%%%%%%%%%%%%%%%%%%%%%%%%%%%%%%%%%%%%%%%%%%%%%%%%%%%%%%%%%%%%%%%%%%%%%%%%%%%
Obviously, the~inner product \eqref{LaguerreSIP} and the monic polynomial $S_n$ depend on the parameter $\alpha>-1$, so that in what follows, we will denote $S^{\alpha}_{n}=S_{n}$. Formula \eqref{compasymt} allows us to obtain other asymptotic formulas for the polynomials $S^{\alpha}_{n}$.  Three of them are included in the following~corollary.

\begin{corollary}\label{CoroAsym-all} Let $\alpha, \beta >-1$, $n\in \ZZp$ and  $k\geq-n$. Under~the hypotheses of Theorem  \ref{asymp}, we obtain
\begin{align}
  (1)\quad  & \label{Asymt-DiferentParam-1}
\frac{S^{\alpha+\beta}_{n+k}(z)}{n^{k+\beta/2} \;L^{\alpha}_{n}(z)}\rightrightarrows \left(-1\right)^k
\left(\sqrt{-z} \right)^{-\beta}\; \prod_{j=1}^{N}\left(\frac{\sqrt{-x}-\sqrt{|c_{j}|}}{\sqrt{-x}+\sqrt{|c_{j}|}}\right),  \qquad K\subset\overline{\CC}\setminus\RRp. \\
  (2) \quad  & \label{Asymt-DiferentParam-2}
\frac{S^{\alpha+\beta}_{n+k}(z)}{n^{k+\beta/2} \;S^{\alpha}_{n}(z)}\rightrightarrows \left(-1\right)^k
\left(\sqrt{-z} \right)^{-\beta},  \qquad K\subset\overline{\CC}\setminus\RRp.\\
  (3) \quad  & \label{Asymt-DiferentParam-3}  \frac{\left(S_{n}^{\alpha}(z)\right)^{(\nu)}}{\left(L_{n}^{\alpha}(z)\right)^{(\nu)}}    \rightrightarrows  \prod_{j=1}^{N}\left(\frac{\sqrt{-x}-\sqrt{|c_{j}|}}{\sqrt{-x}+\sqrt{|c_{j}|}}\right), \quad K\subset\overline{\CC}\setminus\RRp.
\end{align}
\end{corollary}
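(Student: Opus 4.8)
The plan is to derive all three identities from Theorem~\ref{asymp}, Lemma~\ref{Lem-LagRatioAsymp} and the kernel asymptotics lemma proved above, using only algebraic manipulations of ratios that converge uniformly on compacta of $\overline{\CC}\setminus\RRp$; since finite products and quotients of such sequences again converge uniformly there (quotients provided the limiting denominator does not vanish), uniform convergence will come for free. For~\eqref{Asymt-DiferentParam-1} I would factor
\[
\frac{S^{\alpha+\beta}_{n+k}(z)}{n^{k+\beta/2}\,L^{\alpha}_{n}(z)}=\frac{S^{\alpha+\beta}_{n+k}(z)}{L^{\alpha+\beta}_{n+k}(z)}\cdot\frac{L^{\alpha+\beta}_{n+k}(z)}{n^{k+\beta/2}\,L^{\alpha}_{n}(z)}.
\]
The first factor tends to $\prod_{j=1}^{N}\frac{\sqrt{-z}-\sqrt{|c_j|}}{\sqrt{-z}+\sqrt{|c_j|}}$ by Theorem~\ref{asymp} applied with parameter $\alpha+\beta$ and degree $n+k$. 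For the second, writing $L^{\gamma}_{m}=(-1)^{m}m!\,L^{(\gamma)}_{m}$ turns it into $(-1)^{k}\frac{(n+k)!}{n!}\,n^{-k-\beta/2}\frac{L^{(\alpha+\beta)}_{n+k}(z)}{L^{(\alpha)}_{n}(z)}$; since $\frac{(n+k)!}{n!}=n^{k}(1+\mathcal{O}(n^{-1}))$ and Lemma~\ref{Lem-LagRatioAsymp} gives $\frac{L^{(\alpha+\beta)}_{n+k}(z)}{L^{(\alpha)}_{n}(z)}=(\sqrt{n}/\sqrt{-z})^{\beta}(1+\mathcal{O}_z(n^{-1/2}))$ for $\beta\neq0$ (and $\to 1$ for $\beta=0$), the second factor converges to $(-1)^{k}(\sqrt{-z})^{-\beta}$, and the product of the two limits is the right-hand side of~\eqref{Asymt-DiferentParam-1}.

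For~\eqref{Asymt-DiferentParam-2} I would write $\frac{S^{\alpha+\beta}_{n+k}(z)}{n^{k+\beta/2}S^{\alpha}_{n}(z)}=\frac{S^{\alpha+\beta}_{n+k}(z)}{n^{k+\beta/2}L^{\alpha}_{n}(z)}\cdot\frac{L^{\alpha}_{n}(z)}{S^{\alpha}_{n}(z)}$, use~\eqref{Asymt-DiferentParam-1} for the first factor and Theorem~\ref{asymp} for the second, and note that the two limiting products are mutually reciprocal, so only $(-1)^{k}(\sqrt{-z})^{-\beta}$ survives. The delicate point is that the limiting function of Theorem~\ref{asymp} vanishes exactly at the mass points $z=c_j$, where $S^{\alpha}_{n}$ carries an attracted zero, so $L^{\alpha}_{n}/S^{\alpha}_{n}$ is only locally uniformly bounded on $\overline{\CC}\setminus(\RRp\cup\{c_1,\dots,c_N\})$; accordingly~\eqref{Asymt-DiferentParam-2} is really established on compacta of that punctured region.

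For~\eqref{Asymt-DiferentParam-3} I would differentiate the kernel representation~\eqref{eqkern} $\nu$ times with respect to $x$, obtaining $\frac{\widehat{S}^{(\nu)}_{n}(x)}{(L^{(\alpha)}_{n})^{(\nu)}(x)}=1-\sum_{j=1}^{N}\lambda_{j}\,\widehat{S}^{(d_j)}_{n}(c_j)\,\frac{K^{(\nu,d_j)}_{n-1}(x,c_j)}{(L^{(\alpha)}_{n})^{(\nu)}(x)}$. Using the iterated Hahn relation $(L^{(\alpha)}_{n})^{(\nu)}=(-1)^{\nu}L^{(\alpha+\nu)}_{n-\nu}$, the kernel asymptotics lemma, and $L^{(\alpha+\nu)}_{n}/L^{(\alpha+\nu)}_{n-\nu}=1+\mathcal{O}_x(n^{-1/2})$ from Lemma~\ref{Lem-LagRatioAsymp}, one verifies that $K^{(\nu,d_j)}_{n-1}(x,c_j)/(L^{(\alpha)}_{n})^{(\nu)}(x)$ has exactly the same leading term $\frac{(-1)^{d_j}L^{(\alpha+d_j)}_{n}(c_j)}{n^{\alpha-1/2}(\sqrt{-x}+\sqrt{-c_j})}(1+\mathcal{O}_x(n^{-1/2}))$ as $K^{(0,d_j)}_{n-1}(x,c_j)/L^{(\alpha)}_{n}(x)$; therefore the $j$-th summand equals $P^{\alpha}_{n,j}(x)(1+\mathcal{O}_x(n^{-1/2}))$, which tends to $P^{\alpha}_{j}(x)$ by the proof of Theorem~\ref{asymp}. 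Summing over $j$, using $1+\sum_j P^{\alpha}_j(x)=\prod_{j=1}^{N}\frac{\sqrt{-x}-\sqrt{|c_j|}}{\sqrt{-x}+\sqrt{|c_j|}}$ and the identity $\widehat{S}^{(\nu)}_{n}/(L^{(\alpha)}_{n})^{(\nu)}=(S^{\alpha}_{n})^{(\nu)}/(L^{\alpha}_{n})^{(\nu)}$ (the factor $(-1)^n/n!$ cancels), we obtain~\eqref{Asymt-DiferentParam-3}; here $(L^{\alpha}_{n})^{(\nu)}$ is proportional to $L^{\alpha+\nu}_{n-\nu}$, whose zeros all lie in $\RRp$, so there is no mass-point obstruction and the convergence is uniform on all compacta of $\overline{\CC}\setminus\RRp$.

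The routine parts are the bookkeeping with Lemma~\ref{Lem-LagRatioAsymp} and Perron's formula in~\eqref{Asymt-DiferentParam-1}, and the verification that differentiating the kernel does not alter its leading asymptotics in~\eqref{Asymt-DiferentParam-3}. The one genuine subtlety, which I expect to be the main obstacle, is the mass-point behavior flagged for~\eqref{Asymt-DiferentParam-2}: comparison against $S^{\alpha}_n$ forces one to stay away from the points $c_j$, where both the limiting function of Theorem~\ref{asymp} and $S^{\alpha}_n$ vanish, so strictly speaking that formula holds on compacta of $\overline{\CC}\setminus(\RRp\cup\{c_1,\dots,c_N\})$.
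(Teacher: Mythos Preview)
Your arguments for \eqref{Asymt-DiferentParam-1} and \eqref{Asymt-DiferentParam-2} coincide with the paper's: it simply states that both are ``direct consequences of Theorem~\ref{asymp} and Lemma~\ref{Lem-LagRatioAsymp}'', which is exactly your factorisation. Your remark that \eqref{Asymt-DiferentParam-2} should really be read on compacta of $\overline{\CC}\setminus(\RRp\cup\{c_1,\dots,c_N\})$ is a valid caveat that the paper does not make explicit.

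For \eqref{Asymt-DiferentParam-3} you take a genuinely different route. The paper proceeds by induction on $\nu$: writing $u=(S_n^\alpha)^{(\kappa)}$, $v=(L_n^\alpha)^{(\kappa)}$, the quotient-rule identity
\[
\frac{u'}{v'}=\frac{v}{v'}\left(\frac{u}{v}\right)'+\frac{u}{v}
\]
reduces the step $\kappa\to\kappa+1$ to two facts: $(u/v)'$ converges (Weierstrass' theorem on derivatives of locally uniform limits of holomorphic functions, applied to the induction hypothesis), and $v/v'=(L_n^\alpha)^{(\kappa)}/(L_n^\alpha)^{(\kappa+1)}\rightrightarrows 0$ by Lemma~\ref{Lem-LagRatioAsymp}. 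Your approach instead differentiates the connection formula~\eqref{eqkern} and re-runs the asymptotic analysis of Theorem~\ref{asymp} with $K_{n-1}^{(\nu,d_j)}$ in place of $K_{n-1}^{(0,d_j)}$, checking via the kernel lemma and the Hahn relation that the $\nu$-dependence cancels to leading order. This is correct and has the merit of being entirely explicit, avoiding the implicit appeal to Weierstrass; the price is that it leans on the full machinery (kernel asymptotics, the $P^\alpha_{n,j}$ system) of the proof of Theorem~\ref{asymp}, whereas the paper's induction needs only its \emph{statement} together with a single ratio estimate for Laguerre polynomials.
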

%%%%%%%%%%%%%%%%%%%%%%%%%%%%%%%%%%%%%%%%%%%%%%%%%%%%%%%%%%%%%%%%%%%%%%%%%%%%%%%%%%%%%%%%%%%%%%%%%%%%%%%
\begin{proof} Formulas \eqref{Asymt-DiferentParam-1} and \eqref{Asymt-DiferentParam-2} are direct consequences of Theorem \ref{asymp} and  Lemma \ref{Lem-LagRatioAsymp}.

The proof  of  \eqref{Asymt-DiferentParam-3} is by induction on $\nu$. Of~course, \eqref{compasymt} is \eqref{Asymt-DiferentParam-3} for $\nu=0$. Assume that \eqref{Asymt-DiferentParam-3} is true for $\nu=\kappa\geq 0$. Note that
\begin{align*}
\frac{\left(S_{n}^{\alpha}(z)\right)^{(\kappa+1)}}{\left(L_{n}^{\alpha}(z)\right)^{(\kappa+1)}} =   &  \frac{\left(L_{n}^{\alpha}(z)\right)^{(\kappa)}}{\left(L_{n}^{\alpha}(z)\right)^{(\kappa+1)}}\;  \left(\frac{\left(S_{n}^{\alpha}(z)\right)^{(\kappa)}}{\left(L_{n}^{\alpha}(z)\right)^{(\kappa)}}\right)^\prime + \frac{\left(S_{n}^{\alpha}(z)\right)^{(\kappa)}}{\left(L_{n}^{\alpha}(z)\right)^{(\kappa)}}
\end{align*}

From \eqref{Lead-Coeff},  \eqref{Derivada de Ln} and Lemma \ref{Lem-LagRatioAsymp}
\begin{align*}
   \frac{\left(L_{n}^{\alpha}(z)\right)^{(\kappa)}}{\left(L_{n}^{\alpha}(z)\right)^{(\kappa+1)}} =& \frac{L_{n-\kappa}^{(\alpha+\kappa)}}{L_{n-\kappa-1}^{(\alpha+\kappa+1)}}\rightrightarrows 0,  \qquad K\subset\overline{\CC}\setminus\RRp.
\end{align*}

Hence, from~  Theorem \ref{asymp},  we obtain \eqref{Asymt-DiferentParam-3} for $\nu=\kappa+1.$
\end{proof}
%%%%%%%%%%%%%%%%%%%%%%%%%%%%%%%%%%%%%%%%%%%%%%%%%%%%%%%%%%%%%%%%%%%%%%%%%%%%%%%%%%%%%%%%%%%%%%%%%%%%%%%%


\begin{thebibliography}{999}

 \bibitem{Osil99}
Osilenker, B. \emph{Fourier Series in Orthogonal Polynomials}; World Scientific: Singapore, 1999.

\bibitem{Simon08}
  Simon, B. The Christoffel-Darboux kernel. Perspectives in PDE, Harmonic Analysis and Applications: A volume in honor of VG Maz’ya’s 70th birthday. \emph{Proc. Sympos. Pure Math. Amer. Math. Soc.} \textbf{2008}, \emph{79}, 295--335.

\bibitem{Chi78}
Chihara, T.S.   \emph{An Introduction to Orthogonal Polynomials}; Gordon and Breach: New York, NY, USA, 1978.

\bibitem{Freud71}
Freud, G. \emph{Orthogonal Polynomials}; Pergamon Press: Oxford, UK, 1971.

\bibitem{Szego75}
Szeg\H{o}, G.  \emph{Orthogonal Polynomials}, 4th ed.; American Mathematical Society Colloquium Publications Series;  American Mathematical Society: Providence, RI, USA, 1975; Volume 23.

 \bibitem{MaXu15}
Marcell\'{a}n, F.; Xu, Y. On Sobolev orthogonal polynomials. \emph{Expo. Math.} \textbf{2015}, \emph{33},  308--352.

\bibitem{And01}
Martinez-Finkelshtein, A. Analytic properties of Sobolev orthogonal polynomials revisited. \emph{J. Comput. Appl. Math.} \textbf{2001}, \emph{127},  255--266.

\bibitem{LopMarVan95}
Lagomasino, G.L.; Marcell\'{a}n, F.; Assche, W.V. Relative asymptotics for orthogonal polynomials with respect to a discrete Sobolev inner product. \emph{Constr. Approx.} \textbf{1995},  \emph{11}, 107--137.

\bibitem{AlfaroLagomasinoRezola}
Alfaro, M.; Lagomasino, G.L.; Rezola, M.L. Some properties of zeros of Sobolev-type orthogonal polynomials. \emph{J. Comput. Appl. Math.} \textbf{1996}, \emph{69}, 171--179.

\bibitem{AbelIgnPij20}
D\'{\i}az-Gonz\'{a}lez, A.; Pijeira-Cabrera, H.; P\'{e}rez-Yzquierdo, I. Rational approximation and Sobolev-type orthogonality. \emph{J. Approx. Theory} \textbf{2020}, \emph{260},  105481-1--105481-19.

\bibitem{Littlejohn}
Littlejohn, L.L. The Krall polynomials: A new class of orthogonal polynomials. \emph{Quaest. Math.} \textbf{1982}, \emph{5},   255--265.

\bibitem{HMP-PAMS14}
Huertas, E.J.; Marcell\'{a}n, F.; Pijeira-Cabrera, H.  An electrostatic model for zeros of perturbed Laguerre polynomials. \emph{Proc. Amer. Math. Soc.} \textbf{2014}, \emph{142}, 1733--1747.

\bibitem{LoPiPe01}
Lagomasino, G.L.; Pijeira-Cabrera, H.; P\'{e}rez, I. Sobolev orthogonal polynomials in the complex plane. \emph{J. Comput. Appl. Math.} \textbf{2001}, \emph{127},  219--230.

\bibitem{SchRat02}
Schatz, G.C.; Ratner, M.A. \emph{Quantum Mechanics in Chemistry}; Dover Publications: Mineola, NY, USA, 2002.

\bibitem{DHM12}
Due\ {n}as, H.; Huertas, E.; Marcell\'{a}n, F. Asymptotic properties of Laguerre-Sobolev type orthogonal polynomials, Numer. \emph{Algorithms} \textbf{2012}, \emph{26},  51--73.

\bibitem{MZFH12}
Marcell\'{a}n, F.; Zejnullahu, R.; Fejzullahu, B.; Huertas, E. On orthogonal polynomials with respect to certain discrete Sobolev inner product. \emph{Pacific J. Math.} \textbf{2012}, \emph{257}, 167--188.

\bibitem{Ahl79}
Ahlfors, L. \emph{Complex Analysis}, 3rd ed.; McGraw-Hill Book Co.:  {New York,}
 NY, USA,
 1979.

\bibitem{Rus05}
Rusev, P. \emph{Classical Orthogonal Polynomials and Their Associated Functions in Complex Domain}; Marin Drinov Academic Publishing House: {Sofia, Bulgaria,} 
 2005.



\end{thebibliography}
\end{document}